\numberwithin{equation}{section}
\newtheorem{theorem}{Theorem}[section]
\newtheorem{corollary}[theorem]{Corollary}
\newtheorem{lemma}[theorem]{Lemma}
\theoremstyle{definition}
\newtheorem{remark}[theorem]{Remark}
\theoremstyle{definition}
\theoremstyle{definition}
\def\dashint{\operatorname%
{\,\,\text{\bf-}\kern-.98em\DOTSI\intop\ilimits@\!\!}}
\def\\det{\text{det}}
\def\.5{\frac{1}{2}}
\newcommand{\RN}[1]{%
  \textup{\uppercase\expandafter{\romannumeral#1}}%
}
\renewcommand{\epsilon}{\varepsilon}
\newcounter{marnote}
\begin{document}
\title[Gradient Estimates for Elliptic systems ]{Estimates for Elliptic Systems in a Narrow Region arising from Composite Materials}
\author[H. J. Ju]{Hongjie Ju}
\address[H.J. Ju]{School of  Sciences, Beijing University of Posts  and Telecommunications,
Beijing 100876, China}
\email{hjju@bupt.edu.cn}
\thanks{H.J. Ju   was partially supported by NSFC (11301034) (11471050).}

\author[H.G. Li]{Haigang Li}
\address[H.G. Li]{School of Mathematical Sciences, Beijing Normal University, Laboratory of Mathematics and Complex Systems, Ministry of Education, Beijing 100875, China. }
\email{hgli@bnu.edu.cn}
\thanks{H.G. Li was partially supported by  NSFC (11571042) (11371060) (11631002), Fok Ying Tung Education Foundation (151003). Corresponding author.}

\author[L.J. Xu]{Longjuan Xu}
\address[L.J. Xu]{School of Mathematical Sciences, Beijing Normal University, Laboratory of Mathematics and Complex Systems, Ministry of Education, Beijing 100875, China.}


\date{\today} 



\begin{abstract}
In this paper, we establish the pointwise upper and lower bounds of the gradients of solutions to a class of elliptic systems, including linear systems of elasticity, in a general narrow region and in all dimensions. This problem  arises from the study of damage analysis of high-contrast composite materials. Our results show that the damage may initiate from the narrowest place.
\end{abstract}

\maketitle

\section{Introduction and main results}

From the structure of fiber-reinforced composite, there are a relatively large number of fibers which are touching or nearly touching. The maximal strains can be strongly influenced by the distances between fibers. Especially, in  high-contrast fiber-reinforced composites high concentration of extreme electric field or mechanical loads will occur in the narrow regions between two adjacent fibers. The purpose of this paper is to establish gradient estimates for solutions to a class of elliptic systems, including linear systems of elasticity, in such narrow regions. 

A composite medium would be represented by a bounded domain $\Omega$, divided into a finite number of subdomains. A simple two-dimensional example, which very well illustrates the main feature of our estimates, would have the domain $\Omega\subset\mathbb{R}^{2}$ model the cross-section of a fiber-reinforced composite, with $D_{1}\cup{D}_{2}\subset\Omega$ representing the cross-section of the fibers, the remaining subdomain $\Omega\setminus\overline{D_{1}\cup{D}_{2}}$ representing the matrix surrounding the fibers.
It is well known that for the scalar case, the anti-plane shear model is inconsistent with the two-dimensional conductivity model. Thus, the blow-up analysis for electric field has a valuable meaning in relation to the damage analysis of composite material. The most important quantities from an engineering point of view are $|\nabla{u}|$, representing the electric field in the conductivity problem or the stresses in the anti-plane shear model. Therefore, stimulated by the well-known work on damage analysis of fiber composites \cite{BAPK, K1, M}, there have been a number of papers, starting from \cite{BV,LN,LV}, on gradient estimates for solutions of elliptic equations and systems with piecewise smooth coefficients which are relevant in such studies.
 See \cite{ABTV,ACKLY,ADKL,AKL,AKLLeZ,AKLLiZ,BLY1, BLY2, BLL, BLL2, BT,BT2,KLY,LLBY,LY,Y1,Y2} and the references therein.

In order to investigate the high concentration phenomenon of high-contrast composites when $\mathrm{dist}(D_{1},D_{2})$ is small, it is important to study the gradient estimate for the limiting case of a class of elliptic equations and systems with partially degenerated coefficients, that is, the coefficients in $D_{1}$ and $D_{2}$ degenerate to $\infty$. In a recent paper \cite{ADKL}, some gradient estimates were obtained concerning the conductivity problem where the conductivity is allowed to be $\infty$ (perfect conductor).

{\bf Theorem A (\cite{ADKL}).}  Let $B_1$ and $B_2$ be two balls in $\mathbb{R}^3$ with radius $R$ and centered at $(0, 0, \pm R\pm\frac{\varepsilon}{2})$, respectively, (See Figure \ref{fig:1.1}). Let $H$ be a harmonic function in $\mathbb{R}^3$ such that $H(0)=0$. Define $u$ to be the solutions of
\begin{align*}
\begin{cases}
\Delta u=0,&\hbox{in}\ \mathbb{R}^3\setminus\overline{B_1\cup B_2},\\
u=0, &\hbox{on}\ \partial B_1\cup \partial B_2,\\
u(x)-H(x)=O(|x|^{-1}), &\hbox{as}\ |x|\rightarrow+\infty.
\end{cases}
\end{align*}
Then there exists a constant $C$ independent of $\varepsilon$ such that $$\|\nabla(u-H)\|_{L^\infty(\mathbb{R}^3\setminus\overline{B_1\cup B_2})}\leq C.$$

\begin{figure}[t]
\begin{minipage}[c]{0.43\linewidth}
\centering
\includegraphics[width=1.2in]{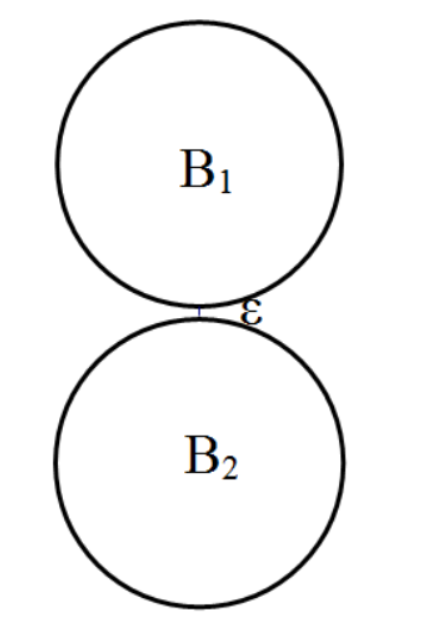}
\caption{\small Two closely spaced balls.}
\label{fig:1.1}
\end{minipage}
\begin{minipage}[c]{0.52\linewidth}
{\centering
\includegraphics[width=2.7in]{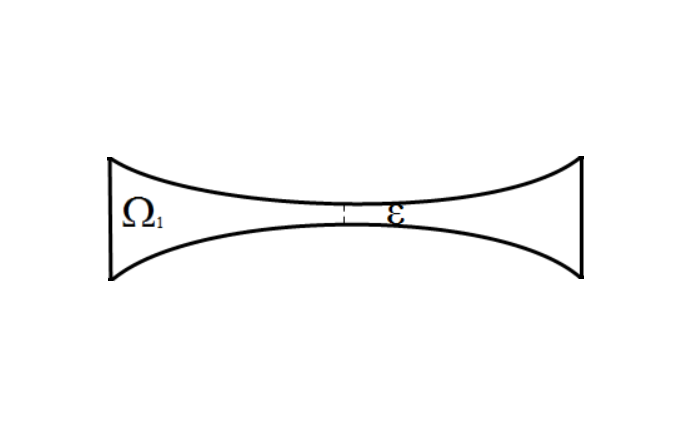}}
\caption{\small A narrow region $\Omega_1$.}
\label{fig:1.2}
\end{minipage}
\end{figure}

Theorem A shows that when the boundary values are both zero on $\partial{B}_{1}$ and $\partial{B}_{2}$, $|\nabla{u}|$ is bounded, so no concentration occurs. Theorem A was extended to the elliptic systems in  \cite{LLBY}, and was improved that   $|\nabla{u}|$ decays exponentially fast near the origin. Later, another proof for scalar case was given in \cite{KLY2}.    However, it is much more interesting to study the case when the boundary data  are different on $\partial{B}_{1}$ and $\partial{B}_{2}$, which more frequently appears in practical engineering applications, see \cite{BC,K1,K2,M}, where it is shown that in dimension two $|\nabla u|$ may blow up in the narrow region between $B_1$ and $B_2$, as $\varepsilon\to 0$.

Contrary to the scalar equation, less is known  on such blow-up  phenomenon
for the linear elasticity case. Therefore, our effort is focussed on the narrow region (see Figure 2) to investigate the gradient estimate for a class of general elliptic systems,   including linear systems of elasticity.

Before stating our results, we first fix our domain. To be precise, we define a more general narrow region in all dimensions as follows: for $r\leq1$,
 $$\Omega_r:=\left\{x=(x',x_{n})\in \mathbb{R}^n~\big|~ -\frac{\varepsilon}{2}+h_2(x')<x_{n}<\frac{\varepsilon}{2}+h_1(x'), ~x'\in B_r(0') \right\},$$
 where $B_r(0'):=\left\{x'=(x_1,\cdots,x_{n-1})\in \mathbb{R}^{n-1} ~\big|~ |x'|<r\right\}$ is a ball in $\mathbb{R}^{n-1}$, centered at the origin $0'$ of radius $r$, $h_1$ and $h_2$ are smooth functions in $B_1(0')$ satisfying
 $$-\frac{\varepsilon}{2}+h_2(x')<\frac{\varepsilon}{2}+h_1(x'),\quad \hbox{for}\ |x'|\leq1,$$
 \begin{align}\label{h0}h_1(0')=h_2(0')=0,\quad \nabla h_1(0')=\nabla h_2(0')=0,\end{align}
 \begin{align}\label{h1}
 \nabla^2 (h_1-h_2)(0')\geq \kappa_0I_{n-1}
 \end{align}and
 \begin{align}\label{eq1.5'}
\|h_1\|_{C^2(B_1(0'))}+\|h_2\|_{C^2(B_1(0'))}\leq \kappa_{1},
\end{align}
 where $I_{n-1}$ is the $(n-1)\times(n-1)$ identity matrix, $\kappa_0$, $ \kappa_1$ are some positive constants.
 We denote the top and bottom boundaries of $\Omega_r$ as
 $$\Gamma_r^+=\{x\in \mathbb{R}^n\, |\, x_{n}=\frac{\varepsilon}{2}+h_1(x'), |x'|\leq r\},~ \Gamma_r^-=\{x\in \mathbb{R}^n \,|\, x_{n}=-\frac{\varepsilon}{2}+h_2(x'), |x'|\leq r\},$$respectively.

Let $u=(u^1,\cdots,u^N)$ be a vector-valued function. Consider the following boundary value problem:
\begin{align}\label{eq1.1}
\begin{cases}
\partial_\alpha\left(A_{ij}^{\alpha\beta}(x)\partial_\beta u^j+B_{ij}^\alpha(x) u^j\right)+C_{ij}^\beta(x) \partial_\beta u^j+D_{ij}(x)u^j=0,\quad&
\hbox{in}\  \Omega_1,  \\
u=\varphi(x),\quad &\hbox{on}\ \Gamma_1^+,\\
u=\psi(x), \quad&\hbox{on} \ \Gamma_1^-,
\end{cases}
\end{align}
where $\varphi(x)=(\varphi^1(x), \varphi^2(x), \cdots, \varphi^N(x))\in C^2(\Gamma_1^+; \mathbb{R}^N), \psi(x)=(\psi^1(x), \psi^2(x), \cdots, $
$\psi^N(x))\in C^2(\Gamma_1^-; \mathbb{R}^N)$ are given vector-valued functions. Here the usual summation convention is used: $\alpha$ and $\beta$ are summed from 1 to $n$, while $i$ and $j$
are summed from 1 to $N$.

The coefficients  $A_{ij}^{\alpha\beta}(x)$ are measurable, bounded, that is,
\begin{align}\label{eq1.2}
|A_{ij}^{\alpha\beta}(x)| \leq \Lambda
\end{align}for some constant $\Lambda>0$
and satisfy the rather weak ellipticity condition, that is,
there exists a constant $0<\lambda<\infty$ such that
\begin{align}\label{eq1.3}
\int_{\Omega_1}A_{ij}^{\alpha\beta}(x)\partial_{\alpha}\xi^i\partial_{\beta}\xi^jdx\geq \lambda\int_{\Omega_1}|\nabla\xi|^2dx,\quad \forall\ \xi\in H_0^1(\Omega_1; \mathbb{R}^N ).
\end{align}
Recall that a system is called a system of elasticity if $N=n$, the coefficients satisfy
$$A_{ij}^{\alpha\beta}(x)=A_{ji}^{\beta\alpha}(x)=A_{\alpha\,j}^{i\beta}(x),$$
and for all $n\times{n}$ symmetric matrices $\xi_{\alpha}^{i}$,
$$\lambda|\xi|\leq\,A_{ij}^{\alpha\beta}(x)\xi_{\alpha}^{i}\xi_{\beta}^{j}\leq\Lambda|\xi|^{2}.$$ It is clear that hypotheses \eqref{eq1.2} and \eqref{eq1.3} are satisfied by the linear systems of elasticity, especially by the Lam\'{e} system,  see \cite{O},$$\lambda\Delta{u}+(\lambda+\mu)\nabla(\nabla\cdot{u})=0.$$  Furthermore, we assume that \begin{align}\label{eq1.4}\|A\|_{C^2(\Omega_1)}+\|B\|_{C^2(\Omega_1)}+\|C\|_{C^2(\Omega_1)}+\|D\|_{C^2(\Omega_1)}\leq \kappa_{2},\end{align}
 for some positive constant $\kappa_{2}$. Throughout the paper, unless otherwise stated, we  use  $C$ to denote some positive constant, whose values may vary from line to line, which depend only on $n,\ N,\ \lambda,\ \Lambda, \ \kappa_{0}, \ \kappa_{1},\ \kappa_{2}$, but not on $\varepsilon$. Also, we call a constant having such dependence a {\it universal constant}.

In the paper, the main result concerns local piecewise gradient estimates of weak solutions $u $ of problem (\ref{eq1.1}); that is, $u\in{H}^{1}(\Omega_1; \mathbb{R}^N)$, and satisfies
$$\int_{\Omega_{1}}\left(A_{ij}^{\alpha\beta}(x)\partial_\beta u^j+B_{ij}^\alpha(x) u^j\right)\partial_{\alpha}\zeta^{i}-C_{ij}^\beta(x) \partial_\beta u^j\zeta^{i}-D_{ij}(x)u^j\zeta^{i}dx=0,$$
for every vector-valued function $\zeta=(\zeta^{1},\cdots,\zeta^{N})\in{C}_{c}^{\infty}(\Omega_{1};\mathbb{R}^N)$, and hence for every $\zeta\in{H}_{0}^{1}(\Omega_1; \mathbb{R}^N)$.

\begin{theorem}\label{thm1.1}
Assume that hypotheses (\ref{h0})--(\ref{eq1.5'}) and (\ref{eq1.2})--(\ref{eq1.4}) are satisfied, and let $u\in H^{1}(\Omega_1; \mathbb{R}^N)$ be a weak solution of problem (\ref{eq1.1}). Then, for $x\in \Omega_{1/2}$,
\begin{align}\label{mainestimate}
|\nabla u(x',x_{n})|
\leq&\, \frac{C}{\varepsilon+|x'|^2}\Big|\varphi(x',\varepsilon/2+h_{1}(x'))-\psi(x',-\varepsilon/2+h_{2}(x'))\Big|\nonumber\\
&+C\left(\|\varphi\|_{C^{2}(\Gamma_{1}^{+})}+\|\psi\|_{C^{2}(\Gamma_{1}^{-})}+\|u\|_{L^2(\Omega_1)}\right).
\end{align}
Moreover, if $\varphi^l(0',\varepsilon/2)\neq\psi^l(0',-\varepsilon/2)$ for some integer $l$, then
\begin{align}\label{lower-bound}|\nabla u(0',x_{n})|\geq\frac{|\varphi^l(0',\varepsilon/2)-\psi^l(0',-\varepsilon/2)|}{C\varepsilon},\quad \forall\ x_n\in\left(-\frac{\varepsilon}{2},\frac{\varepsilon}{2}\right).\end{align}

\end{theorem}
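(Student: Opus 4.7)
Both bounds are obtained by subtracting an explicit $x_n$-linear interpolant $\bar u$ of the boundary data from $u$, then controlling the remainder $v := u - \bar u$ by an anisotropic rescaling that flattens each thin cylindrical slice of $\Omega_1$ to a domain of unit size. Explicitly, put
\begin{equation*}
\bar u(x',x_n) := \frac{x_n - h_2(x') + \varepsilon/2}{\varepsilon + h_1(x') - h_2(x')}\,\varphi(x',\tfrac{\varepsilon}{2}+h_1(x')) + \frac{\varepsilon/2 + h_1(x') - x_n}{\varepsilon + h_1(x') - h_2(x')}\,\psi(x',-\tfrac{\varepsilon}{2}+h_2(x')),
\end{equation*}
so that $\bar u$ matches $\varphi$ on $\Gamma_1^+$ and $\psi$ on $\Gamma_1^-$, is affine in $x_n$ (hence $\partial^2_{x_nx_n}\bar u\equiv 0$), and
\[
\partial_{x_n}\bar u(x',x_n) = \frac{\varphi(x',\tfrac\varepsilon2+h_1(x')) - \psi(x',-\tfrac\varepsilon2+h_2(x'))}{\varepsilon + h_1(x') - h_2(x')}.
\]
From \eqref{h0}--\eqref{eq1.5'}, $\varepsilon + h_1(x') - h_2(x') \asymp \varepsilon + |x'|^2$, so $|\partial_{x_n}\bar u|$ already exhibits the blow-up on the right of \eqref{mainestimate}, while the tangential derivatives $|\nabla_{x'}\bar u|$ have the milder growth $1/\sqrt{\varepsilon+|x'|^2}$.

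\textbf{Upper bound.} The remainder $v\in H^1_0(\Omega_1;\mathbb{R}^N)$ vanishes on $\Gamma_1^\pm$ and solves a system of the form \eqref{eq1.1} with an inhomogeneous source built from $\bar u$, $\nabla\bar u$ and the coefficients. Fix $z'\in\overline{B_{1/2}(0')}$, set $\delta := \varepsilon + |z'|^2$, and work in the cylindrical slice $Q_\delta(z') := \{|x'-z'| < c\sqrt\delta\}\cap\Omega_1$, in which the vertical thickness stays $\asymp \delta$. The anisotropic change of variables
\begin{equation*}
y' = \frac{x'-z'}{\sqrt\delta},\qquad y_n = \frac{x_n - \tfrac12(h_1+h_2)(x')}{\delta}
\end{equation*}
straightens $Q_\delta$ to a smoothly bounded domain of unit size and converts \eqref{eq1.1} into a uniformly elliptic system with $C^2$-bounded coefficients, independent of $\varepsilon$ and $z'$. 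Standard up-to-the-boundary $W^{1,\infty}$ estimates (Agmon--Douglis--Nirenberg or Schauder) applied to the rescaled $\hat v$, combined with a Caccioppoli-plus-Poincar\'e bound on $\|\hat v\|_{L^2}$ (the Poincar\'e constant being universal because $\hat v$ vanishes on the rescaled top and bottom), give $\|\nabla\hat v\|_{L^\infty} \le C(\|u\|_{L^2(\Omega_1)} + \|\varphi\|_{C^2}+\|\psi\|_{C^2})$. Scaling back multiplies $|\partial_{x_n}v|$ by $1/\delta$ and $|\nabla_{x'}v|$ by $1/\sqrt\delta$; combined with the bound on $\nabla\bar u$ above, this proves \eqref{mainestimate}.

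\textbf{Lower bound.} At $x'=0'$ the fundamental theorem of calculus and the boundary conditions give
\begin{equation*}
\int_{-\varepsilon/2}^{\varepsilon/2}\partial_{x_n}u^l(0',t)\,dt = \varphi^l(0',\varepsilon/2) - \psi^l(0',-\varepsilon/2).
\end{equation*}
Since $\partial_{x_n}\bar u^l(0',t) = (\varphi^l-\psi^l)/\varepsilon$ is constant in $t$, it suffices to show that $|\partial_{x_n}v^l(0',x_n)|$ is of strictly lower order than $|\varphi^l-\psi^l|/\varepsilon$. The essential structural point is that $\partial^2_{x_nx_n}\bar u\equiv 0$, so the source for $v$ does not contain the singular contribution $A^{nn}\partial^2_{x_nx_n}\bar u$; applying the rescaling at $z'=0'$ (so $\delta=\varepsilon$) and the gradient bound of the previous step to $v$ itself gives $|\partial_{x_n}v(0',x_n)| \le C(\|\varphi\|_{C^2}+\|\psi\|_{C^2}+\|u\|_{L^2})$ uniformly in $\varepsilon$. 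Consequently
\begin{equation*}
|\partial_{x_n}u^l(0',x_n)| \ge |\partial_{x_n}\bar u^l(0',x_n)| - |\partial_{x_n}v^l(0',x_n)| \ge \frac{|\varphi^l-\psi^l|}{\varepsilon} - C,
\end{equation*}
which, after possibly enlarging $C$, yields \eqref{lower-bound}.

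\textbf{Main obstacle.} The technical crux is the rescaled energy estimate in the upper-bound step: establishing that the $\delta$-dependent change of variables yields a uniformly elliptic problem with $W^{1,\infty}$ constants depending only on the universal parameters, so that no inverse power of $\varepsilon$ reappears in the rescaled estimates. This requires the careful matching of horizontal and vertical geometric scales, use of the curvature hypothesis \eqref{h1} to secure a uniform lower bound on $\varepsilon + h_1 - h_2$ inside $Q_\delta$, and systematic bookkeeping of how the lower-order coefficients $B, C, D$ and the source generated by $\bar u$ transform under the anisotropic change of variables.
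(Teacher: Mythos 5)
Your overall architecture (subtract an explicit $x_n$-affine interpolant $\bar u$ whose normal derivative already carries the $1/(\varepsilon+|x'|^2)$ singularity, then estimate the remainder by a rescaling in thin cylinders) matches the paper's. But there are two genuine gaps.

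First, the anisotropic rescaling $y'=(x'-z')/\sqrt{\delta}$, $y_n=(x_n-\cdots)/\delta$ does \emph{not} turn the system \eqref{eq1.1} into a uniformly elliptic system with $\varepsilon$-independent constants, contrary to your claim. Under that change of variables, $\partial_{x_\alpha}\sim\delta^{-1/2}\partial_{y_\alpha}$ ($\alpha<n$) while $\partial_{x_n}\sim\delta^{-1}\partial_{y_n}$, so after clearing the factor $\delta^{-2}$ the rescaled principal part becomes $A^{nn}\partial^2_{y_ny_n}+\sqrt{\delta}\,A^{\alpha n}\partial^2_{y_\alpha y_n}+\delta\,A^{\alpha\beta}\partial^2_{y_\alpha y_\beta}$, which degenerates in the tangential directions as $\delta\to0$. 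The ADN/$W^{2,p}$ constants would blow up. The paper instead rescales \emph{isotropically}, $x'-x_0'=\delta y'$, $x_n=\delta y_n$, on the \emph{thinner} cylinder $|x'-x_0'|<\delta$ (not $\sqrt{\delta}$). There the rescaled coefficients are simply $\hat A(y)=A(\delta y+x_0)$ with lower-order terms picking up harmless factors of $\delta$, so uniform ellipticity and $C^2$ bounds are preserved, and the cylinder rescales to a unit-size domain.

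Second, and more importantly, the ``Caccioppoli-plus-Poincar\'e'' step is not enough to control $\|\nabla\hat v\|_{L^2}$ on the rescaled cylinder by universal constants. Caccioppoli on $\widehat\Omega_s(x_0)$ gives $\int_{\widehat\Omega_t}|\nabla w|^2\leq \frac{C}{(s-t)^2}\int_{\widehat\Omega_s}|w|^2+\cdots$, and Poincar\'e (since $w$ vanishes on $\Gamma_1^\pm$ and the thickness is $\asymp\delta$) gives $\int_{\widehat\Omega_s}|w|^2\leq C\delta^2\int_{\widehat\Omega_s}|\nabla w|^2$. Combining yields the single contraction inequality $\int_{\widehat\Omega_t}|\nabla w|^2\leq \big(\frac{C\delta}{s-t}\big)^2\int_{\widehat\Omega_s}|\nabla w|^2+\cdots$. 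Applied once, with $s-t\asymp\delta$, this gives nothing: the prefactor is $O(1)$. What the paper does (Lemma \ref{lem2.2}, the Bao--Li--Li iteration) is iterate this inequality $O(1/\sqrt{\varepsilon})$ or $O(1/|x_0'|)$ times, advancing the radius in steps of size $\asymp\delta$ from $\delta$ out to $\sqrt{\varepsilon}$ or $|x_0'|$, and sum the geometric series while tracking the source terms from $\tilde f$. Only this yields the crucial local energy gain $\int_{\widehat\Omega_\delta(x_0)}|\nabla w|^2\lesssim\delta^{n-1}(\cdots)$; the final $L^\infty$ rescaling step collapses without it. Your proposal is missing this iteration entirely and replaces it with an assertion. (Separately, your lower-bound step claims $|\partial_{x_n}v(0',x_n)|\leq C$; the paper's Lemma \ref{lem2.3} only proves $O(\varepsilon^{-1/2})$, which still suffices against the $\varepsilon^{-1}$ main term, so that is a recoverable over-claim, not a gap.)
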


\begin{remark}
Some remarks on Theorem \ref{thm1.1} are in order.

\noindent $(i)$ When $\varphi(x',\varepsilon/2+h_{1}(x'))\equiv\psi(x',-\varepsilon/2+h_{2}(x'))$, we first observe from \eqref{mainestimate} that $|\nabla{u}|\leq\,C$ and there is no blow-up occurring, which is consistent with the main result of \cite{LLBY}.

 \noindent $(ii)$ On the other hand, it is easy to see that if $\varphi=a$, $\psi=b$ for two different constant vectors $a$ and $b$, then $|\nabla{u}|$ will blow up, especially with rate $\varepsilon^{-1}$ at the origin.

\noindent  $(iii)$ When $\varphi(0',\varepsilon/2)=\psi(0',-\varepsilon/2)$, by the Taylor  expansions of $\varphi$ and $\psi$, we have
\begin{align*}
|\nabla u(x',x_{n})|
\leq&\, \frac{C}{\varepsilon+|x'|^2}\Big|\left(\nabla_{x'}\varphi(0',\varepsilon/2)-\nabla_{x'}\psi(0',-\varepsilon/2)\right)\cdot x'+O(|x'|^2)\Big|\nonumber\\
&+C\left(\|\varphi\|_{C^{2}(\Gamma_{1}^{+})}+\|\psi\|_{C^{2}(\Gamma_{1}^{-})}+\|u\|_{L^2(\Omega_1)}\right),\ x\in\Omega_{1/2}.
\end{align*}
Clearly, if $\nabla_{x'}\varphi(0',\varepsilon/2)\neq\nabla_{x'}\psi(0',-\varepsilon/2)$,  then $|\nabla{u}|\leq C$  on $\Omega_\varepsilon$, where $C$ is independent of $\varepsilon$.  If $\nabla_{x'}\varphi(0',\varepsilon/2)=\nabla_{x'}\psi(0',-\varepsilon/2)$,  $|\nabla{u}|$ is uniformly bounded on $\Omega_{1/2}$. Consequently, in the case that $\varphi(0',\varepsilon/2)=\psi(0',-\varepsilon/2)$, there is no blow-up occurring at the origin.

Theorem \ref{thm1.1} gives more information about the dependence of $|\nabla{u}|$, which will play an important role in the study of the perfect conductivity problem (e.g. \cite{BLY1,BLY2,LX}) and Lam\'e system with partially infinite coefficients (e.g. \cite{BLL,BLL2,BJL}), where the coefficients in the inclusions are allowed to be $\infty$.
\end{remark}
For the convenience of further applications, we list the analog result for the conductivity problem in the narrow region as a consequence. For the boundary value problem of Laplace equation
\begin{align}\label{eq1.1'}
\begin{cases}
\Delta u=0,\quad&
\hbox{in}\  \Omega_1,  \\
u=\varphi(x),\quad &\hbox{on}\ \Gamma_1^+,\\
u=\psi(x), \quad&\hbox{on} \ \Gamma_1^-,
\end{cases}
\end{align}we have
\begin{corollary}\label{corol1.2}
 Assume that $u\in H^{1}(\Omega_1)$ is a weak solution of (\ref{eq1.1'}), $\varphi(x)\in C^2(\Gamma_1^+), \ \psi(x)\in C^2(\Gamma_1^-)$ are given functions. Then, for $x\in \Omega_{1/2}$,
\begin{align}\label{mainestimate'}
|\nabla u(x',x_{n})|
\leq&\, \frac{C}{\varepsilon+|x'|^2}\Big|\varphi(x',\varepsilon/2+h_{1}(x'))-\psi(x',-\varepsilon/2+h_{2}(x'))\Big|\nonumber\\
&+C\left(\|\varphi\|_{C^{2}(\Gamma_{1}^{+})}+\|\psi\|_{C^{2}(\Gamma_{1}^{-})}+\|u\|_{L^2(\Omega_1)}\right).
\end{align}
If $\varphi(0',\varepsilon/2)\neq\psi(0',-\varepsilon/2)$, then
\begin{align*}|\nabla u(0',x_{n})|\geq\frac{|\varphi(0',\varepsilon/2)-\psi(0',-\varepsilon/2)|}{C\varepsilon},\quad \forall\ x_n\in\left(-\frac{\varepsilon}{2}, \frac{\varepsilon}{2}\right).\end{align*}

\end{corollary}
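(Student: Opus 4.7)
The plan is to read Corollary \ref{corol1.2} as an immediate scalar specialization of Theorem \ref{thm1.1}. First I would take $N=1$ in the system (\ref{eq1.1}) and choose $A^{\alpha\beta}(x)=\delta^{\alpha\beta}$ together with $B^\alpha\equiv C^\beta\equiv D\equiv 0$. Under this choice the system collapses to $\Delta u = 0$, so any $H^1(\Omega_1)$ weak solution of (\ref{eq1.1'}) is automatically a weak solution of the specialized system (\ref{eq1.1}) in the sense defined before Theorem \ref{thm1.1}.

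Next I would verify that the hypotheses of Theorem \ref{thm1.1} hold for this choice of data. The boundedness condition (\ref{eq1.2}) is satisfied with $\Lambda=1$; the ellipticity condition (\ref{eq1.3}) holds with $\lambda=1$, since
\[
\int_{\Omega_1} \delta^{\alpha\beta}\,\partial_\alpha\xi\,\partial_\beta\xi\,dx = \int_{\Omega_1}|\nabla\xi|^2\,dx;
\]
and the $C^2$-regularity bound (\ref{eq1.4}) is trivial with any $\kappa_2>0$. The symmetry and strong-ellipticity conditions mentioned after (\ref{eq1.3}) pertain only to the elasticity example and are not part of the hypotheses of Theorem \ref{thm1.1}. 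The geometric assumptions (\ref{h0})--(\ref{eq1.5'}) on $h_1,h_2$ coincide verbatim in both statements.

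With these checks in place, I would simply invoke Theorem \ref{thm1.1}. Its conclusion (\ref{mainestimate}) reduces to the upper bound (\ref{mainestimate'}) claimed in the corollary. For the lower bound, because $N=1$ forces $l=1$, the hypothesis $\varphi^l(0',\varepsilon/2)\neq\psi^l(0',-\varepsilon/2)$ in (\ref{lower-bound}) collapses to the single condition $\varphi(0',\varepsilon/2)\neq\psi(0',-\varepsilon/2)$, and the estimate (\ref{lower-bound}) transfers word for word. There is no genuine obstacle in this reduction; the corollary is stated separately only so that it can be quoted directly in the perfect conductivity applications referenced in the remark following Theorem \ref{thm1.1}.
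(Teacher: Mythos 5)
Your reduction is correct and logically valid: for $N=1$, $A^{\alpha\beta}=\delta^{\alpha\beta}$, $B=C=D\equiv 0$, the boundedness and ellipticity hypotheses hold with $\Lambda=\lambda=1$, the regularity hypothesis is trivial, the geometric hypotheses are unchanged, and the conclusion of Theorem~\ref{thm1.1} with $l=1$ is \emph{verbatim} the statement of Corollary~\ref{corol1.2}. This is a clean and genuinely shorter route than what the paper does.

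The paper, however, does \emph{not} simply specialize Theorem~\ref{thm1.1}; Section~\ref{sec_3} reruns the whole energy--iteration--rescaling machinery directly for the Laplacian. The reason is that the scalar proof yields slightly sharper intermediate estimates. In the general case, the forcing term $\tilde f$ involves the cross derivative $\partial_{\alpha n}\tilde u$, which contributes a term of size $|x'|/(\varepsilon+|x'|^2)^2$ to \eqref{f}; for the Laplacian this term vanishes because $\Delta\tilde u$ contains only $\partial_{\alpha\alpha}\tilde u$ with $\alpha<n$ and $\partial_{nn}\tilde u=0$. As a result, the paper's local energy estimate is stronger (compare the factors $\varepsilon^{n}$ and $|x_0'|^{2n}$ in Step~2 of Section~\ref{sec_3} with $\varepsilon^{n-1}$ and $|x_0'|^{2(n-1)}$ in Lemma~\ref{lem2.2}), and Step~3 of Section~\ref{sec_3} gives $\|\nabla w\|_{L^\infty}\lesssim |\varphi-\psi|+\sqrt{\varepsilon}(\cdots)$ rather than $|\varphi-\psi|/\sqrt{\varepsilon}+\cdots$ from Lemma~\ref{lem2.3}. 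None of this sharpening survives into the final inequality \eqref{mainestimate'}, which is dominated by $|\partial_n\tilde u|$ in both cases, so the stated corollary follows either way. Your specialization argument buys brevity and makes the logical dependence explicit; the paper's re-derivation buys better intermediate bounds on $\nabla w$ that could be useful elsewhere but are not reflected in the corollary as stated.
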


Remark 1.2 is also true for the problem (\ref{eq1.1'}).

The paper is organized as follows. In Section 2, we use energy method and an adaptive version of  Bao-Li-Li's iteration technique \cite{BLL} to prove Theorem \ref{thm1.1}. The main differences of the proof of Corollary 1.3 with that of Theorem 1.1 are given   in Section 3.

\vspace{.5cm}

\section{Proof of Theorem 1.1}\label{sec_2}

We decompose the solution of (\ref{eq1.1}) as follows:
\begin{align*}
u=v_1+v_2+\cdots+v_N,
\end{align*}
where $v_l=(v_l^1, v_l^2, \cdots, v_l^N)$, $l=1,2, \cdots, N$, with  $v_l^j=0$ for $j\neq l$, and $v_l$ satisfies the following boundary value problem
\begin{align}\label{eq_v2.1}
\begin{cases}
  \partial_\alpha\left(A_{ij}^{\alpha\beta}(x)\partial_\beta v_l^j+B_{ij}^\alpha(x) v_l^j\right)+C_{ij}^\beta(x) \partial_\beta v_l^j+D_{ij}(x)v_l^j=0,
&\hbox{in}\  \Omega_1,  \\
v_l=(0,\cdots, 0, \varphi^l, 0,\ \cdots, 0),\ &\hbox{on}\ \Gamma_1^+,\\
v_l=(0,\cdots, 0, \psi^l, 0,\ \cdots, 0),&\hbox{on} \ \Gamma_1^-.
\end{cases}
\end{align}
Then
\begin{equation*}
\nabla{u}=\sum_{l=1}^{N}\nabla{v}_{l}.
\end{equation*}
In order to estimate $|\nabla v_l|$, $l=1,\cdots,N$, we  introduce a scalar function $\bar{u}\in C^2(\mathbb{R}^n)$ such that $\bar{u}=1$ on $\Gamma_1^+$, $\bar{u}=0$ on $\Gamma_1^-$ and
\begin{align}\label{bar_u}\bar{u}(x)=\frac{x_{n}-h_2(x')+\frac{\varepsilon}{2}}{\varepsilon+h_1(x')-h_2(x')},\quad\hbox{in}\ \Omega_1.\end{align} By a direct calculation, we obtain that
\begin{align}
|\partial_{\alpha}\bar{u}(x)|\leq\frac{C|x'|}{\varepsilon+|x'|^2},~ \alpha=1,\cdots, n-1,~~\frac{1}{C(\varepsilon+|x'|^2)}\leq|\partial_{n}\bar{u}(x)|\leq\frac{C}{\varepsilon+|x'|^2},\label{e2.4}
\end{align}
and for $\alpha,\ \beta=1,\cdots, n-1,$
\begin{align}
|\partial_{\alpha \beta}\bar{u}(x)|\leq\frac{C}{\varepsilon+|x'|^2},\quad \quad|\partial_{\alpha n}\bar{u}(x)|\leq\frac{C|x'|}{(\varepsilon+|x'|^2)^2},\quad \partial_{nn}\bar{u}(x)=0.\label{ee2.4}
\end{align}

For $l=1,2,\cdots, N$, we define
\begin{equation}\label{equ_tildeu}
\tilde{u}_{l}(x)=(0, \cdots, 0, \varphi^{l}(x',\varepsilon/2+h_{1}(x'))\bar{u}(x)+\psi^{l}(x',-\varepsilon/2+h_{2}(x'))(1-\bar{u}(x)), 0, \cdots, 0).
\end{equation}
 Thus, in view of (\ref{e2.4}) and (\ref{ee2.4}),
\begin{align}
&|\nabla_{x'}\tilde{u}_l(x)|\leq\frac{C|x'|}{\varepsilon+|x'|^2}|\varphi^l(x',\varepsilon/2+h_{1}(x'))-\psi^l(x',-\varepsilon/2+h_{2}(x'))|\nonumber\\
&\ \quad\quad\quad\quad\quad+C(\|\nabla\varphi^l\|_{L^{\infty}}+\|\nabla\psi^l\|_{L^{\infty}}),\label{eq1.7}\\
&\frac{|\varphi^l(x',\varepsilon/2+h_{1}(x'))-\psi^l(x',-\varepsilon/2+h_{2}(x'))|}{C(\varepsilon+|x'|^2)}\leq
\nonumber\\
&|\partial_{n}\tilde{u}_l(x)|\leq\frac{C|\varphi^l(x',\varepsilon/2+h_{1}(x'))
-\psi^l(x',-\varepsilon/2+h_{2}(x'))|}{\varepsilon+|x'|^2},\label{eq1.7a}
\end{align}
and by using (\ref{e2.4}) and (\ref{ee2.4}), for $\alpha,\ \beta=1,\cdots, n-1$,
\begin{align}
|\partial_{\alpha \beta}\tilde{u}_l(x)|&\leq\frac{C}{\varepsilon+|x'|^2}|\varphi^l(x',\varepsilon/2+h_{1}(x'))-\psi^l(x',-\varepsilon/2+h_{2}(x'))|\nonumber\\
&\quad+C\left(\frac{|x'|}{\varepsilon+|x'|^2}+1\right)(\|\nabla\varphi^l\|_{L^{\infty}}+
\|\nabla\psi^l\|_{L^{\infty}})\nonumber\\
&\quad+C(\|\nabla^{2}\varphi^l\|_{L^{\infty}}+\|\nabla^{2}\psi^l\|_{L^{\infty}}),\label{eq1.8}\\
|\partial_{\alpha n}\tilde{u}_{l}(x)|
&\leq\frac{C|x'|}{(\varepsilon+|x'|^2)^2}|\varphi^{l}(x',\varepsilon/2+h_{1}(x'))-\psi(x',-\varepsilon/2+h_{2}(x'))|\nonumber\\
&\quad+\frac{C}{\varepsilon+|x'|^2}(\|\nabla\varphi^{l}\|_{L^{\infty}}+\|\nabla\psi^{l}\|_{L^{\infty}}),\label{eq1.9}\\
\partial_{n n}\tilde{u}_{l}(x)&=0.\label{eq1.10}
\end{align}
Here and throughout the paper, for simplicity we use $\|\nabla\varphi\|_{L^{\infty}}$,  $\|\nabla\psi\|_{L^{\infty}}$, $\|\nabla^{2}\varphi\|_{L^{\infty}}$ and $\|\nabla^{2}\psi\|_{L^{\infty}}$ to denote $\|\nabla\varphi\|_{L^{\infty}(\Gamma_{1}^{+})}$,  $\|\nabla\psi\|_{L^{\infty}(\Gamma_{1}^{-})}$, $\|\nabla^{2}\varphi\|_{L^{\infty}(\Gamma_{1}^{+})}$ and $\|\nabla^{2}\psi\|_{L^{\infty}(\Gamma_{1}^{-})}$,  respectively.

Let $$w_l=v_l-\tilde{u}_l,\qquad l=1,\cdots,N.$$
Then $w$ satisfies
\begin{align}\label{eq2.6}
\begin{cases}
  \partial_\alpha\left(A_{ij}^{\alpha\beta}(x)\partial_\beta w^j+B_{ij}^\alpha(x) w^j\right)+C_{ij}^\beta(x) \partial_\beta w^j+D_{ij}(x)w^j=\tilde{f}^{i},&
\hbox{in}\  \Omega_1,  \\
w=0, \quad&\hbox{on} \ \Gamma_1^\pm,
\end{cases}
\end{align}
where
\begin{align*}
 \tilde{f}^{i}=&
-\partial_\alpha \left(A_{ij}^{\alpha\beta}(x)\partial_\beta \tilde{u}^j+B_{ij}^\alpha(x) \tilde{u}^j+C_{ij}^\alpha(x) \tilde{u}^j\right)\\
&+\partial_\beta (C_{ij}^\beta(x))  \tilde{u}^j-D_{ij}(x)\tilde{u}^j.\nonumber
\end{align*}
Let $\tilde{f}:=(\tilde{f}^{1}, \cdots, \tilde{f}^{N})$,
then it follows from (\ref{eq1.4}) and \eqref{equ_tildeu}--(\ref{eq1.10}) that for $(x',x_n)\in \Omega_1$,
\begin{align}\label{f}
|\tilde{f}(x',x_n)|\leq&\, C|\nabla^2\tilde{u}(x',x_n)|+C|\nabla\tilde{u}(x',x_n)|+C|\tilde{u}(x',x_n)|\nonumber\\
\leq&\left(\frac{C}{\varepsilon+|x'|^2}+\frac{C|x'|}{(\varepsilon+|x'|^2)^2}\right)
|\varphi(x',\varepsilon/2+h_{1}(x'))-\psi(x',-\varepsilon/2+h_{2}(x'))|\nonumber\\
&+\left(\frac{C}{\varepsilon+|x'|^2}+\frac{C|x'|}{\varepsilon+|x'|^2}\right)
(\|\nabla\varphi\|_{L^{\infty}}+\|\nabla\psi\|_{L^{\infty}})\nonumber\\
&+C(\|\nabla^{2}\varphi\|_{L^{\infty}}
+\|\nabla^{2}\psi\|_{L^{\infty}}),
\end{align}
where $C$ is independent of $\varepsilon$.

\begin{lemma}\label{lem2.1}
Let $v_l\in H^1(\Omega_1; \mathbb{R}^N)$ be a weak solution of (\ref{eq_v2.1}), then
\begin{align}\label{lem2.2equ}
\int_{\Omega_{1/2}}|\nabla w_l|^2dx\leq C\left(\|w_l\|^2_{L^2(\Omega_1)}+\|\varphi^{l}\|_{C^{2}(\Gamma_{1}^{+})}^{2}
+\|\psi^{l}\|_{C^{2}(\Gamma_{1}^{-})}^{2}\right),\qquad\,l=1,\cdots,N,
\end{align}
where $C$ depends on $n$, $\lambda$, $\kappa_0$, $\kappa_1$ and $\kappa_2$.
\end{lemma}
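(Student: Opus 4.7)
The strategy is a Caccioppoli-type energy estimate augmented by a Hardy--Poincar\'e device to tame the singular behavior of $\nabla\tilde u_l$. To begin, I would test the weak formulation of \eqref{eq2.6} for $w_l$ against $\zeta=\eta(x')^{2}w_l$, where $\eta\in C_c^\infty(B_1(0'))$ depends \emph{only} on $x'$ and satisfies $\eta\equiv 1$ on $B_{1/2}(0')$, $|\nabla\eta|\le C$. This is a legitimate test function because $w_l$ vanishes on $\Gamma_1^\pm$ and $\eta$ vanishes near $\{|x'|=1\}$. The coercivity hypothesis \eqref{eq1.3} produces $\lambda\int_{\Omega_1}\eta^{2}|\nabla w_l|^{2}\,dx$ on the left-hand side, and the cross terms coming from $\partial_\alpha\eta$ together with the lower-order $B,C,D$ contributions can be absorbed routinely via Young's inequality at the cost of $\|w_l\|_{L^2(\Omega_1)}^{2}$.

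The essential difficulty lies in the forcing $\tilde f$. Exploiting its divergence structure, integration by parts yields
\[
-\int_{\Omega_1}\tilde f^{i}\eta^{2}w_l^{i}\,dx=\int_{\Omega_1}G_\alpha^{i}\,\partial_\alpha(\eta^{2}w_l^{i})\,dx+\textrm{bdd},\qquad G_\alpha^{i}:=A_{ij}^{\alpha\beta}\partial_\beta\tilde u_l^{j}+B_{ij}^{\alpha}\tilde u_l^{j}+C_{ij}^{\alpha}\tilde u_l^{j},
\]
where the bounded remainder poses no trouble since $|\tilde u_l|\le C\|\varphi^l,\psi^l\|_{L^\infty}$. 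The dangerous piece is $\int_{\Omega_1}A_{il}^{\alpha n}\partial_n\tilde u_l^{l}\,\eta^{2}\partial_\alpha w_l^{i}\,dx$, because $|\partial_n\tilde u_l^{l}|\sim|\varphi^l-\psi^l|/(\varepsilon+|x'|^{2})$ is not even square integrable in low dimensions. To neutralize it, I would perform a \emph{second} integration by parts: in $x_n$ when $\alpha=n$, and in $x_\alpha$ when $\alpha\ne n$. All boundary terms vanish -- on $\Gamma_1^\pm$ because $w_l=0$, on $\{|x'|=1\}$ because $\eta=0$ (or $\nu_n=0$). Crucially, the $\alpha=n$ case produces an $A_{il}^{nn}\partial_{nn}\tilde u_l^{l}\,\eta^{2}w_l^{i}$ term that vanishes \emph{identically} by virtue of the identity $\partial_{nn}\bar u\equiv 0$ recorded in \eqref{ee2.4}; in the $\alpha\ne n$ case the $C^{2}$ regularity of $A$ lets $\partial_\alpha$ fall harmlessly on coefficients. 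What survives are integrals with at most one factor of $\partial_n\tilde u_l$ (or $\partial_\alpha\partial_n\tilde u_l$) multiplying $w_l$. The ``tangential'' pieces with $\beta\ne n$ require no integration by parts, because $\int_{\Omega_1}\eta^{2}|\partial_{x'}\tilde u_l|^{2}\,dx\le C\|\varphi^l,\psi^l\|_{C^1}^{2}$ is already uniformly bounded: the thickness weight $\varepsilon+h_1-h_2\sim\varepsilon+|x'|^{2}$ cancels the singular factor $|x'|^{2}/(\varepsilon+|x'|^{2})^{2}$ visible in \eqref{eq1.7}.

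The heart of the matter is controlling the residual singular integrals, which all take the form $\int_{\Omega_1}\eta^{2}|w_l|/(\varepsilon+|x'|^{2})\,dx$ or $\int_{\Omega_1}\eta^{2}|x'||w_l|/(\varepsilon+|x'|^{2})^{2}\,dx$. For these I would invoke the weighted Hardy--Poincar\'e inequality
\[
\int_{\Omega_1}\frac{\eta^{2}|w_l|^{2}}{(\varepsilon+|x'|^{2})^{2}}\,dx\le C\int_{\Omega_1}\eta^{2}|\partial_n w_l|^{2}\,dx,
\]
which follows by slicewise Poincar\'e in $x_n$ since $w_l=0$ on $\Gamma_1^\pm$ and each slice has length $\sim\varepsilon+|x'|^{2}$, together with the elementary thickness bound $\int_{\Omega_1}dx/(\varepsilon+|x'|^{2})\le C$ (that same cancellation again). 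Cauchy--Schwarz then estimates each residual integral by $C\|\varphi^l,\psi^l\|_{C^2}\bigl(\int_{\Omega_1}\eta^{2}|\nabla w_l|^{2}\bigr)^{1/2}$, and Young's inequality with small $\delta$ allows absorption into the left-hand side. Since $\eta\equiv 1$ on $\Omega_{1/2}$, this gives the desired bound. The principal obstacle is precisely the interplay of the identity $\partial_{nn}\bar u=0$ with the Hardy--Poincar\'e inequality; without either device, the non-$L^{2}$ singularity of $\partial_n\tilde u_l$ would survive.
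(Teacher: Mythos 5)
Your argument is correct and reaches \eqref{lem2.2equ}, but it differs from the paper's proof in two organizational choices worth noting. First, for the lateral boundary: you localize with a tangential cutoff $\eta(x')$ supported away from $\{|x'|=1\}$, so that $\eta^2 w_l\in H_0^1(\Omega_1;\mathbb{R}^N)$ is admissible and no boundary term ever appears; the paper instead integrates by parts over $\Omega_{1/2}$ directly, which creates a surface term on $\{|x'|=1/2\}$ that it then must estimate via Sobolev embedding and classical $W^{2,p}$ estimates applied to $w_l$ in the annular region $\Omega_{2/3}\setminus\overline{\Omega_{1/3}}$, exploiting that $\tilde f$ is uniformly bounded away from the origin. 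Your cutoff route is cleaner in that it removes the need for that auxiliary annular estimate. Second, for the singular normal derivative: you retain a few residual integrals of the form $\int\eta^2|\partial_n\tilde u_l|\,|w_l|$ and $\int\eta^2|\partial_{\alpha n}\tilde u_l|\,|w_l|$, which you then tame with the slicewise Hardy--Poincar\'e inequality (valid because $w_l=0$ on $\Gamma_1^\pm$ and each vertical slice has length $\sim\varepsilon+|x'|^2$, precisely compensating the singular weight). The paper avoids this device by pushing the integration by parts further, so that after all manipulations only $\nabla_{x'}\tilde u_l$ and $\tilde u_l$ itself ever appear paired with $w_l$ or $\nabla w_l$, and the uniform $L^2$ bound $\int_{\Omega_{1/2}}|\nabla_{x'}\tilde u_l|^2\,dx\le C$ from the thickness cancellation suffices. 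Both routes lean crucially, as you correctly identify, on the identity $\partial_{nn}\bar u\equiv 0$ in \eqref{ee2.4}; without it, neither integration by parts nor Hardy--Poincar\'e would rescue the $(\varepsilon+|x'|^2)^{-2}$ singularity. Your Hardy--Poincar\'e variant is somewhat more robust -- it would still give useful control if the second normal derivative of $\bar u$ merely satisfied a favorable weighted bound rather than vanishing -- at the modest cost of one extra inequality that the paper dispenses with.
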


\begin{proof} For simplicity, we assume that $\psi\equiv0$.
We only prove the case when $l=1$ for instance. The other cases are the same. Denote
$$w:=w_1, \ \tilde{u}:=\tilde{u}_1 \ \mbox{and}\quad \varphi:=\varphi^1.$$
Then,
\begin{align*}
 \tilde{f}^{i}
=&-\partial_\alpha \left(A_{i1}^{\alpha\beta}(x)\partial_\beta \tilde{u}^1+B_{i1}^\alpha(x) \tilde{u}^1+C_{i1}^\alpha(x) \tilde{u}^1\right)\nonumber\\
&+\partial_\beta (C_{i1}^\beta(x))\tilde{u}^1-D_{i1}(x)\tilde{u}^1,\nonumber
\end{align*}
and it follows from (\ref{eq1.4}) and \eqref{equ_tildeu}--(\ref{eq1.10}) that
\begin{align}\label{fg}
|\tilde{f}^{i}(x)|
&\leq\,C(|\nabla^2\tilde{u}^1(x)|+|\nabla\tilde{u}^1(x)|+|\tilde{u}^1(x)|)\nonumber\\
&\leq C\|\varphi\|_{C^2(\Gamma_1^+)},\quad\,x\in\Omega_{1}\setminus\overline{\Omega_{1/4}}.
\end{align}
Multiplying the equation in (\ref{eq2.6}) by $w$ and applying integration by parts in $\Omega_{1/2}$, we have
\begin{align*}
&\int_{\Omega_{1/2}}A_{ij}^{\alpha\beta}(x)\partial_\beta w^j\partial_\alpha w^idx
\\
=&-\int_{\Omega_{1/2}}B_{ij}^{\alpha}(x) w^j\partial_\alpha w^idx
+\int_{\Omega_{1/2}}C_{ij}^{\beta}(x)\partial_\beta w^j w^idx+\int_{\Omega_{1/2}}D_{ij}(x) w^jw^idx\\
&-\int_{\Omega_{1/2}}\tilde{f}^{i}w^idx+\int\limits_{\scriptstyle |x'|={\frac{1}{2}},\atop\scriptstyle
-\frac{\varepsilon}{2}+h_2(x')<x_{n}<\frac{\varepsilon}{2}+h_1(x')\hfill}\left(A_{ij}^{\alpha\beta}(x)\partial_\beta w^j+B_{ij}^\alpha(x) w^j\right)w^{i}\frac{x_{\alpha}}{r}ds.
\end{align*}
Using the weak ellipticity condition and the Cauchy inequality, we obtain
\begin{align}\label{equ1}
\lambda\int_{\Omega_{1/2}}|\nabla w|^2dx
\leq&\,\int_{\Omega_{1/2}}A_{ij}^{\alpha\beta}(x)\partial_\beta w^j\partial_\alpha w^idx\nonumber\\
\leq&\,\frac{\lambda}{4}\int_{\Omega_{1/2}}|\nabla w|^2dx+C \int_{\Omega_{1/2}}|w|^2dx+\left|\int_{\Omega_{1/2}}\tilde{f}^{i}w^idx\right|\nonumber\\
&+C\int\limits_{\scriptstyle |x'|={\frac{1}{2}},\atop\scriptstyle
-\frac{\varepsilon}{2}+h_2(x')<x_{n}<\frac{\varepsilon}{2}+h_1(x')\hfill}\left(|\nabla w|^2+|w|^2\right)ds,
\end{align}
Note that $w=0$ on $\Gamma_1^{\pm}$ and $\overline{\Omega_{2/3}}\setminus \Omega_{1/3}\subset\left((\Omega_{1}\setminus\overline{\Omega_{1/4}})\cup (\Gamma_1^{\pm}\setminus \Gamma_{1/4}^{\pm})\right)$,
 by using  the Sobolev embedding theorem and classical $W^{2, p}$ estimates for elliptic systems, 
 we have, for some $p>n$,
\begin{align*}
\|\nabla w\|_{L^\infty(\Omega_{2/3}\setminus \overline{\Omega_{1/3}})}&\leq C\|w\|_{W^{2,p}(\Omega_{2/3}\setminus \overline{\Omega_{1/3}})}\\
&\leq C\left(\|w\|_{L^2(\Omega_{1}\setminus \overline{\Omega_{1/4}})}+\|\tilde{f}\|_{L^\infty(\Omega_{1}\setminus \overline{\Omega_{1/4}})}\right)\nonumber\\
&\leq C\left(\|w\|_{L^2(\Omega_{1})}+\|\varphi\|_{C^2(\Gamma_1^{+})}\right),
\end{align*}
and for $x=(x',x_n)\in\Omega_{2/3}\setminus \overline{\Omega_{1/3}}$,
\begin{align*}
|w(x',x_n)|&=|w(x',x_n)-w(x',\frac{\varepsilon}{2}+h_1(x'))|\\&\leq C(\varepsilon+|x'|^2)\|\nabla w\|_{L^\infty(\Omega_{2/3}\setminus  \overline{\Omega_{1/3}})}\\
&\leq C\left(\|w\|_{L^2(\Omega_{1})}+\|\varphi\|_{C^2(\Gamma_1^{+})}\right).
\end{align*}
In particular, this implies that
\begin{align}\label{w_on|x'|=r}
\int\limits_{\scriptstyle |x'|={\frac{1}{2}},\atop\scriptstyle
-\frac{\varepsilon}{2}+h_2(x')<x_{n}<\frac{\varepsilon}{2}+h_1(x')\hfill}\left(| w|^{2}+|\nabla w|^{2}\right)ds\leq\,C\left[\|w\|^2_{L^2(\Omega_1)}+\|\varphi\|^2_{C^2(\Gamma_1^+)}\right],
\end{align}where $C$ depends only on $n,\ \lambda$ and $\kappa_0$.

Obviously, 
\begin{align}\label{nablax'tildeu on |x'|=r}
\int\limits_{\scriptstyle |x'|={\frac{1}{2}},\atop\scriptstyle
-\frac{\varepsilon}{2}+h_2(x')<x_{n}<\frac{\varepsilon}{2}+h_1(x')\hfill}|\nabla_{x'}\tilde{u}|^2ds\leq C\|\varphi\|^2_{C^1(\Gamma_1^+)},\quad\quad
\end{align}
 and 
\begin{align}\label{nablax'tildeu}
&\int_{\Omega_{1/2}}|\nabla_{x'}\tilde{u}|^2dx\nonumber\\
&\leq C\int_{|x'|<{\frac{1}{2}}}(\varepsilon+h_1(x')-h_2(x'))\left(\frac{|x'|^2|\varphi|^{2}}{(\varepsilon+|x'|^2)^2}+\|\nabla\varphi\|_{L^{\infty}}^{2}\right)dx'\nonumber\\
&\leq C\|\varphi\|_{C^{1}(\Gamma_{1}^{+})}^{2}.
\end{align}
 Applying integration by parts and making use of \eqref{eq1.10} and  \eqref{w_on|x'|=r}--\eqref{nablax'tildeu}, we have
\begin{align}
&\left|\int_{\Omega_{1/2}}\tilde{f}^{i}w^idx\right|\nonumber\\
\leq&\, \left|\int_{\Omega_{1/2}}\sum_{\alpha+\beta<2n}A_{i1}^{\alpha\beta}w^i\partial_{\alpha \beta}\tilde{u}^1dx\right|+\left|\int_{\Omega_{1/2}}\partial_{\alpha}A_{i1}^{\alpha\beta}w^i\partial_{\beta}\tilde{u}^1dx\right|
 +\left|\int_{\Omega_{1/2}}(B_{i1}^\alpha+C_{i1}^\alpha)\tilde{u}^1\partial_{\alpha}w^idx\right|\nonumber\\
 &+\left|\int_{\Omega_{1/2}}(\partial_\beta (C_{i1}^\beta(x))-D_{i1}(x))\tilde{u}^1w^idx\right|+\left|\int\limits_{\scriptstyle |x'|={\frac{1}{2}},\atop\scriptstyle
-\frac{\varepsilon}{2}+h_2(x')<x_{n}<\frac{\varepsilon}{2}+h_1(x')\hfill}(B_{i1}^\alpha+C_{i1}^\alpha)\tilde{u}^1w^i\frac{x_\alpha}{r}ds\right|\nonumber
\end{align}
\begin{align}\label{w2}
\leq& C\int_{\Omega_{1/2}}|\nabla_{x'}\tilde{u}||\nabla w|dx+C\int_{\Omega_{1/2}}|\nabla_{x'}\tilde{u}||w|dx
 +C\int_{\Omega_{1/2}}|\tilde{u}||\nabla w|dx\nonumber\\
 &+C\int_{\Omega_{1/2}}|\tilde{u}||w|dx
 +C\int\limits_{\scriptstyle |x'|={\frac{1}{2}},\atop\scriptstyle
-\frac{\varepsilon}{2}+h_2(x')<x_{n}<\frac{\varepsilon}{2}+h_1(x')\hfill}\left(|\nabla_{x'}\tilde{u}||w|+|\tilde{u}||w|\right)ds\nonumber\\
\leq&\, C\left(\int_{\Omega_{1/2}}|\nabla_{x'}\tilde{u}|^2dx\right)^{\frac{1}{2}}\left(\int_{\Omega_{1/2}}|\nabla w|^2dx\right)^{\frac{1}{2}}+C\left(\int_{\Omega_{1/2}}|\nabla_{x'}\tilde{u}|^2dx\right)^{\frac{1}{2}}\left(\int_{\Omega_{1/2}}|w|^2dx\right)^{\frac{1}{2}}\nonumber\\
&+C\left(\int_{\Omega_{1/2}}|\tilde{u}|^2dx\right)^{\frac{1}{2}}\left(\int_{\Omega_{1/2}}|\nabla w|^2dx\right)^{\frac{1}{2}}
+C\left(\int_{\Omega_{1/2}}|\tilde{u}|^2dx\right)^{\frac{1}{2}}\left(\int_{\Omega_{1/2}}|w|^2dx\right)^{\frac{1}{2}}\nonumber\\
&+C\int\limits_{\scriptstyle |x'|={\frac{1}{2}},\atop\scriptstyle
-\frac{\varepsilon}{2}+h_2(x')<x_{n}<\frac{\varepsilon}{2}+h_1(x')\hfill}(|\nabla_{x'}\tilde{u}|^2+|u|^2)ds+C\int\limits_{\scriptstyle |x'|={\frac{1}{2}},\atop\scriptstyle
-\frac{\varepsilon}{2}+h_2(x')<x_{n}<\frac{\varepsilon}{2}+h_1(x')\hfill}|w|^2ds\nonumber\\
\leq&\, C \|\varphi\|_{C^{1}(\Gamma_{1}^{+})} \left(\int_{\Omega_{1/2}}|\nabla w|^2dx\right)^{\frac{1}{2}}+C(\|w\|^2_{L^2(\Omega_1)}+\|\varphi\|_{C^{2}(\Gamma_{1}^{+})}^{2})\quad\quad\quad\quad\quad\quad\nonumber\\
\leq&\,\frac{\lambda}{4}\int_{\Omega_{1/2}}|\nabla w|^2dx+C(\|w\|^2_{L^2(\Omega_1)}+\|\varphi\|_{C^{2}(\Gamma_{1}^{+})}^{2}).
\end{align}
Inserting  (\ref{w_on|x'|=r}) and (\ref{w2}) to (\ref{equ1}), we obtain that
\begin{align*}
\int_{\Omega_{1/2}}|\nabla w|^2dx\leq C\left(\|w\|^2_{L^2(\Omega_1)}+\|\varphi\|_{C^{2}(\Gamma_{1}^{+})}^{2}\right).
\end{align*}
 Lemma \ref{lem2.1} is established.
\end{proof}

Denote $$\delta(x'):=\varepsilon+h_1(x')-h_2(x').$$  By (\ref{h1}) and (\ref{eq1.5'}), we have
\begin{align}\label{deltabdd}
\frac{1}{C}(\varepsilon+|x'|^2)\leq\delta(x')\leq C(\varepsilon+|x'|^2).
\end{align}
For $x_{0}\in\Omega_{1/2}$, we set
\begin{align}\label{omega_s}\widehat{\Omega}_s(x_{0}):=\left\{~x\in\Omega_{1/2} ~\big|~ |x'-x_{0}'|<s~\right\}, \quad\forall~ 0\leq{s}\leq1/2.\end{align}

\begin{lemma}\label{lem2.2}
For $0\leq|x_{0}'|\leq\sqrt{\varepsilon}$,
\begin{align}\label{step2}
 \int_{\widehat{\Omega}_\delta(x_{0})}|\nabla w_{l}|^2dx
 &\leq C\varepsilon^{n-1}[|\varphi^{l}(x_{0}',\varepsilon/2+h_{1}(x_{0}'))
 -\psi^{l}(x_{0}',-\varepsilon/2+h_{2}(x_{0}'))|^2\nonumber\\
 &\quad+\varepsilon(\|\varphi^{l}\|_{C^2(\Gamma_{1}^{+})}^2+\|\psi^{l}\|_{C^2(\Gamma_{1}^{-})}^2+\|w_l\|^2_{L^2(\Omega_1)})];
\end{align}
and for $\sqrt{\varepsilon}<|x_{0}'|<\frac{1}{2}$,
\begin{align}\label{Step2}
 \int_{\widehat{\Omega}_\delta(x_{0})}|\nabla w_{l}|^2dx
 &\leq C|x_{0}'|^{2(n-1)}[|\varphi^{l}(x_{0}',\varepsilon/2+h_{1}(x_{0}'))-\psi^{l}(x_{0}',-\varepsilon/2+h_{2}(x_{0}'))|^2\nonumber\\
 &\quad+|x_{0}'|^{2}(\|\varphi^{l}\|_{C^2(\Gamma_{1}^{+})}^2+\|\psi^{l}\|_{C^2(\Gamma_{1}^{-})}^2+\|w_l\|^2_{L^2(\Omega_1)})],
\end{align}
where $\delta=\delta(x_{0}')$, $l=1,\cdots,N$.
\end{lemma}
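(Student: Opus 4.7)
The plan is to apply the energy method to \eqref{eq2.6}. Fix $x_0\in\Omega_{1/2}$, write $\delta:=\delta(x_0')$, and note that by \eqref{deltabdd} we have $\delta(x')\sim\delta$ uniformly on $\widehat{\Omega}_{2\delta}(x_0)$. Choose a cutoff $\eta=\eta(x')\in C_c^\infty(B_{2\delta}(x_0'))$ with $\eta\equiv 1$ on $B_\delta(x_0')$ and $|\nabla\eta|\le C/\delta$. Since $w_l$ vanishes on $\Gamma_1^\pm$, the product $w_l\eta^2\in H_0^1(\Omega_1;\mathbb{R}^N)$ is an admissible test function in the weak form of \eqref{eq2.6}. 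Testing with $w_l\eta^2$ and using ellipticity \eqref{eq1.3}, Cauchy's inequality, and the vertical Poincar\'e inequality
\[
\int_{-\varepsilon/2+h_2(x')}^{\varepsilon/2+h_1(x')}|w_l(x',x_n)|^2\,dx_n\le\delta(x')^2\int|\partial_n w_l(x',x_n)|^2\,dx_n
\]
(valid since $w_l=0$ on $\Gamma_1^\pm$), one obtains a Caccioppoli-type bound which, after Widman's hole-filling, becomes a contracting inequality
\[
F(\delta)\le\theta F(2\delta)+R,\qquad\theta\in(0,1)\text{ universal},
\]
where $F(r):=\int_{\widehat{\Omega}_r(x_0)}|\nabla w_l|^2\,dx$ and $R$ is controlled by $C\delta^2\int_{\widehat{\Omega}_{2\delta}(x_0)}|\tilde f|^2\,dx$.

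It remains to bound $\|\tilde f\|_{L^\infty(\widehat{\Omega}_{2\delta}(x_0))}$ sharply. Combining \eqref{f} with the Taylor expansion
\[
|\varphi^l(x',\tfrac{\varepsilon}{2}+h_1(x'))-\psi^l(x',-\tfrac{\varepsilon}{2}+h_2(x'))|\le|\varphi^l(x_0',\tfrac{\varepsilon}{2})-\psi^l(x_0',-\tfrac{\varepsilon}{2})|+C|x'-x_0'|(\|\varphi^l\|_{C^1}+\|\psi^l\|_{C^1})
\]
and the size bound $|x'|\le C\sqrt{\varepsilon}$ when $|x_0'|\le\sqrt{\varepsilon}$ (respectively $|x'|\sim|x_0'|$ when $|x_0'|>\sqrt{\varepsilon}$), one shows
\[
\|\tilde f\|_{L^\infty}\lesssim\frac{|\varphi^l(x_0',\cdot)-\psi^l(x_0',\cdot)|}{\delta^{3/2}}+\frac{\|\nabla\varphi^l\|_{L^\infty}+\|\nabla\psi^l\|_{L^\infty}}{\delta^{1/2}}+\|\nabla^2\varphi^l\|_{L^\infty}+\|\nabla^2\psi^l\|_{L^\infty}.
\]
Multiplying $\|\tilde f\|_{L^\infty}^2$ by $\delta^2\cdot|\widehat{\Omega}_{2\delta}(x_0)|\sim\delta^{n+2}$ then produces contributions of order $\delta^{n-1}|\varphi^l(x_0',\cdot)-\psi^l(x_0',\cdot)|^2+\delta^n(\|\varphi^l\|_{C^1}^2+\|\psi^l\|_{C^1}^2)+\delta^{n+2}(\|\varphi^l\|_{C^2}^2+\|\psi^l\|_{C^2}^2)$, matching the stated prefactors after substituting $\delta\sim\varepsilon$ or $\delta\sim|x_0'|^2$.

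Finally, iterate the contracting inequality through the dyadic scales $\delta,2\delta,4\delta,\dots$ up to a universal scale where Lemma \ref{lem2.1} applies, absorbing the residual $\theta^k F(2^k\delta)$ into the $C\|w_l\|_{L^2(\Omega_1)}^2$ supplied by that lemma. The main obstacle is the iteration bookkeeping: the source contribution $R$ grows with scale, so the iteration step size must be chosen to keep $\theta$ bounded strictly away from $1$ while ensuring that the geometric series $\sum_k\theta^k R(2^k\delta)$ remains of order $\delta^{n-1}|\varphi^l-\psi^l|^2(x_0')$ (resp.\ $|x_0'|^{2(n-1)}|\varphi^l-\psi^l|^2(x_0')$). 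The two regimes $|x_0'|\le\sqrt{\varepsilon}$ and $|x_0'|>\sqrt{\varepsilon}$ require slightly different bookkeeping because both $\|\tilde f\|_{L^\infty}$ and $|\widehat{\Omega}_{2^k\delta}(x_0)|$ change behavior when the iteration scale crosses the threshold $\sqrt{\varepsilon}$.
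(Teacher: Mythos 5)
The overall architecture you propose (test \eqref{eq2.6} with $\eta^2 w_l$, use ellipticity and Cauchy to get a Caccioppoli bound, control $\int |w_l|^2$ via the vertical Poincar\'e inequality across the thin direction, bound $\tilde f$ via \eqref{f}, iterate outward, and close with Lemma~\ref{lem2.1}) is exactly the paper's. The gap is in the iteration scheme, and it is a real one.

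You propose dyadic scaling with Widman hole-filling to obtain $F(\delta)\le\theta F(2\delta)+R$ with $\theta\in(0,1)$ ``universal.'' That produces $\theta=C/(C+1)$ for the Caccioppoli constant $C$, a number that can be arbitrarily close to $1$; over the $k\sim\log_2(1/\varepsilon)$ dyadic steps needed to reach a fixed scale, the accumulated factor is $\theta^k\sim\varepsilon^{\,\log_2(1+1/C)}$, a power of $\varepsilon$ with an exponent that may be far smaller than $n$. This does not absorb the residual into $C\varepsilon^n\|w_l\|_{L^2(\Omega_1)}^2$ as the lemma requires. Worse, the source terms grow geometrically: for the $j$th dyadic step, $(s_j-t_j)^2\sim(2^j\delta)^2$ and $|\widehat\Omega_{2^j\delta}|\sim(2^j\delta)^{n-1}\delta$, so $R_j\sim(2^j)^{n+1}\varepsilon^{n-1}|\varphi^l-\psi^l|^2+\cdots$ grows with ratio $2^{n+1}$, and $\sum_j\theta^jR_j$ converges to the claimed $\varepsilon^{n-1}|\varphi^l-\psi^l|^2$ only if $\theta<2^{-(n+1)}$, again not guaranteed by hole-filling. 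You flag the bookkeeping as ``the main obstacle,'' but the dyadic step choice is precisely what makes it unsolvable as posed.

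The paper sidesteps both problems by taking \emph{additive} steps of fixed size $\sim\varepsilon$ (resp.\ $\sim|x_0'|^2$), namely $t_i=\delta+2C_1i\varepsilon$, and stopping at $t_k\sim\sqrt\varepsilon$ (resp.\ $\sim|x_0'|$). The Caccioppoli constant $(C_1\varepsilon/(s-t))^2$ is driven to exactly $1/4$ not by hole-filling but by \emph{choosing} $s-t=2C_1\varepsilon$ to cancel the universal constant $C_1$. The number of steps is then $k\sim1/\sqrt\varepsilon$, so the residual $(1/4)^kF(\sqrt\varepsilon)$ is smaller than any power of $\varepsilon$, trivially fitting under $C\varepsilon^n\|w_l\|_{L^2}^2$ via Lemma~\ref{lem2.1}; and with $s_i=t_{i+1}\sim(i+1)\varepsilon$, the source terms $R_i\sim(i+1)^{n-1}\varepsilon^{n-1}|\varphi^l-\psi^l|^2$ grow only polynomially in $i$, so $\sum_i(1/4)^iR_i$ converges. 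The additive-step, fixed-$1/4$-contraction design is the essential point you are missing; replacing it with dyadic hole-filling loses precisely the super-polynomial decay needed to match the $\varepsilon^{n-1}$ and $\varepsilon^n$ prefactors in \eqref{step2}--\eqref{Step2}.
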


\begin{proof}
We still assume that $\psi\equiv0$ and only prove the case when $l=1$ for instance, and denote $w:=w_1$, $\tilde{u}:=\tilde{u}_{1}$ and $\varphi:=\varphi^1$.

For $0<t<s<1$,  let $\eta(x')$ be a smooth function satisfying $0\leq \eta(x')\leq1$, $\eta(x')=1$ if $|x'-x_{0}'|<t$, $\eta(x')=0$ if $|x'-x_{0}'|>s$ and $|\nabla\eta(x')|\leq\frac{2}{s-t}$. Multiplying $\eta^2w$ on both sides of the equation in (\ref{eq2.6}) and applying integration by parts, we have
\begin{align*}
&-\int_{\widehat{\Omega}_s(x_{0})}(A_{ij}^{\alpha\beta}(x)\partial_\beta w^j+B_{ij}^{\alpha}(x)w^j)\partial_\alpha(\eta^2w^i)dx
\nonumber\\& +\int_{\widehat{\Omega}_s(x_{0})}C_{ij}^{\beta}(x)\partial_\beta w^j\eta^2w^idx+\int_{\widehat{\Omega}_s(x_{0})}D_{ij}(x) w^j\eta^2w^idx
=\int_{\widehat{\Omega}_s(x_{0})}\tilde{f}^{i}\eta^2w^idx.
\end{align*}
Since\begin{align*}&\int_{\widehat{\Omega}_s(x_{0})}(A_{ij}^{\alpha\beta}(x)\partial_\beta w^j+B_{ij}^{\alpha}(x)w^j)\partial_\alpha(\eta^2w^i)dx\\
=&\,\int_{\widehat{\Omega}_s(x_{0})}A_{ij}^{\alpha\beta}(x)\partial_\beta(\eta w^j)\partial_\alpha(\eta w^i)dx
-\int_{\widehat{\Omega}_s(x_{0})}A_{ij}^{\alpha\beta}(x)(\partial_\beta\eta w^j)\partial_\alpha(\eta w^i)dx\\
&+\int_{\widehat{\Omega}_s(x_{0})}A_{ij}^{\alpha\beta}(x)\partial_\beta(\eta w^j)(\partial_\alpha\eta w^i)dx-\int_{\widehat{\Omega}_s(x_{0})}A_{ij}^{\alpha\beta}(x)(\partial_\beta\eta w^j)(\partial_\alpha\eta w^i)dx,\\
&+\int_{\widehat{\Omega}_s(x_{0})}B_{ij}^{\alpha}(x)(\eta w^j)\partial_\alpha(\eta w^i)dx+\int_{\widehat{\Omega}_s(x_{0})}B_{ij}^{\alpha}(x)(\partial_\alpha \eta w^j)(\eta w^i)dx,
\end{align*}
and
\begin{align*}
&\int_{\widehat{\Omega}_s(x_{0})}C_{ij}^{\beta}(x)\partial_\beta w^j\eta^2w^idx\\
=&\,\int_{\widehat{\Omega}_s(x_{0})}C_{ij}^{\beta}(x)\partial_\beta(\eta w^j)(\eta w^i)dx-\int_{\widehat{\Omega}_s(x_{0})}C_{ij}^{\beta}(x)(\partial_\beta \eta w^j)(\eta w^i)dx,
\end{align*}
by using the weak ellipticity condition (\ref{eq1.3}) and the Cauchy inequality, we have
\begin{align*}&\lambda\int_{\widehat{\Omega}_s(x_{0})}|\nabla(\eta w)|^2dx\leq\int_{\widehat{\Omega}_s(x_{0})}A_{ij}^{\alpha\beta}(x)\partial_\beta(\eta w^j)\partial_\alpha(\eta w^i)dx\\
=&\,\int_{\widehat{\Omega}_s(x_{0})}A_{ij}^{\alpha\beta}(x)(\partial_\beta\eta w^j)\partial_\alpha(\eta w^i)dx
-\int_{\widehat{\Omega}_s(x_{0})}A_{ij}^{\alpha\beta}(x)\partial_\beta(\eta w^j)(\partial_\alpha\eta w^i)dx\\
&+\int_{\widehat{\Omega}_s(x_{0})}A_{ij}^{\alpha\beta}(x)(\partial_\beta\eta w^j)(\partial_\alpha\eta w^i)dx
-\int_{\widehat{\Omega}_s(x_{0})}B_{ij}^{\alpha}(x)(\eta w^j)\partial_\alpha(\eta w^i)dx\\
&-\int_{\widehat{\Omega}_s(x_{0})}B_{ij}^{\alpha}(x)(\partial_\alpha \eta w^j)(\eta w^i)dx+\int_{\widehat{\Omega}_s(x_{0})}C_{ij}^{\beta}(x)\partial_\beta(\eta w^j)(\eta w^i)dx\\
&-\int_{\widehat{\Omega}_s(x_{0})}C_{ij}^{\beta}(x)(\partial_\beta \eta w^j)(\eta w^i)dx\nonumber +\int_{\widehat{\Omega}_s(x_{0})}D_{ij}(x) (\eta w^j)(\eta w^i)dx-\int_{\widehat{\Omega}_s(x_{0})}\eta^{2}\tilde{f}^{i}w^idx\\
\leq&\, \frac{\lambda}{2}\int_{\widehat{\Omega}_s(x_{0})}|\nabla(\eta w)|^2dx+C\int_{\widehat{\Omega}_s(x_{0})}|(\nabla\eta) w|^2dx+\frac{C}{(s-t)^2}\int_{\widehat{\Omega}_s(x_0)}|w|^2dx \\
&+(s-t)^2\int_{\widehat{\Omega}_s(x_0)}|\tilde{f}|^2dx\\
\leq&\, \frac{\lambda}{2}\int_{\widehat{\Omega}_s(x_{0})}|\nabla(\eta w)|^2dx+\frac{C}{(s-t)^2}\int_{\widehat{\Omega}_s(x_{0})}|w|^2dx+(s-t)^2\int_{\widehat{\Omega}_s(x_{0})}|\tilde{f}|^2dx.
\end{align*}
Thus, we obtain
\begin{align}\label{ww}
\int_{\widehat{\Omega}_t(x_{0})}|\nabla w|^2dx\leq\frac{C}{(s-t)^2}\int_{\widehat{\Omega}_s(x_{0})}|w|^2dx +C(s-t)^2\int_{\widehat{\Omega}_s(x_{0})}|\tilde{f}|^2dx.
\end{align}
Note that $w=0$ on $\Gamma_1^-$, by (\ref{eq1.5'}) and the H\"{o}lder inequality, we obtain
\begin{align}\label{w}
\int_{\widehat{\Omega}_s(x_{0})}|w|^2dx&=\int_{\widehat{\Omega}_s(x_{0})}\left|\int_{-\frac{\varepsilon}{2}+h_2(x')}^{x_{n}}\partial_nw(x', x_{n})dx_{n}\right|^2dx\nonumber\\
&\leq \int_{\widehat{\Omega}_s(x_{0})}(\varepsilon+h_1(x')-h_2(x'))\int_{-\frac{\varepsilon}{2}+h_2(x')}^{\frac{\varepsilon}{2}+h_1(x')}|\nabla w|^2dx_{n}dx\nonumber\\
&\leq\int_{|x'-x_{0}'|<s}C(\varepsilon+|x'|^2)^2\int_{-\frac{\varepsilon}{2}+h_2(x')}^{\frac{\varepsilon}{2}+h_1(x')}|\nabla w|^2dx_{n}dx'.
\end{align}
It follows from (\ref{f}) and the mean value theorem that
\begin{align}
&\int_{\widehat{\Omega}_s(x_{0})}|\tilde{f}|^2dx\nonumber\\
\leq&\, |\varphi(x_{0}',\varepsilon/2+h_{1}(x_{0}'))|^2\int_{\widehat{\Omega}_s(x_{0})}\left(\frac{C}{\varepsilon+|x'|^2}+\frac{C|x'|}{(\varepsilon+|x'|^2)^2}\right)^2dx\nonumber\\
&+\|\nabla\varphi\|_{L^\infty}^2\int_{\widehat{\Omega}_s(x_{0})}\left(\frac{C}{\varepsilon+|x'|^2}+\frac{C|x'|}{(\varepsilon+|x'|^2)^2}\right)^2|x'-x_0'|^2dx\nonumber\\
&+\|\nabla\varphi\|_{L^\infty}^2\int_{\widehat{\Omega}_s(x_{0})}\left(\frac{C}{\varepsilon+|x'|^2}+\frac{C|x'|}{\varepsilon+|x'|^2}\right)^2dx+Cs^{n-1}\|\nabla^2\varphi\|_{L^\infty}^2\nonumber\\
\leq&\, C|\varphi(x_{0}',\varepsilon/2+h_{1}(x_{0}'))|^2 \int_{|x'-x_{0}'|<s}\frac{1}{(\varepsilon+|x'|^2)^2}dx'\nonumber\\
&+C\|\nabla\varphi\|_{L^\infty}^2 \int_{|x'-x_{0}'|<s}\left(\frac{1}{\varepsilon+|x'|^2}+\frac{|x'-x_0'|^2}{(\varepsilon+|x'|^2)^2}\right)dx'+Cs^{n-1}\|\nabla^2\varphi\|_{L^\infty}^2.\label{ff}
\end{align}

{\bf Case 1.} For $0\leq |x_{0}'| \leq \sqrt{\varepsilon}$, $0<t<s<\sqrt{\varepsilon}$, from (\ref{w}) and (\ref{ff}), we have
\begin{align}
&\int_{\widehat{\Omega}_s(x_{0})}|w|^2dx\leq C\varepsilon^2\int_{\widehat{\Omega}_s(x_{0})}|\nabla w|^2dx,\label{w21}
\end{align}
and
\begin{align}
&\int_{\widehat{\Omega}_s(x_{0})}|\tilde{f}|^2dx\nonumber\\
&\leq C|\varphi(x_{0}',\varepsilon/2+h_{1}(x_{0}'))|^2\frac{s^{n-1}}{\varepsilon^2}+C\|\nabla\varphi\|_{L^\infty}^2 \frac{s^{n-1}}{\varepsilon}+Cs^{n-1}\|\nabla^2\varphi\|_{L^\infty}^2.\label{f21}
\end{align}
Denote $$F(t):=\int_{\widehat{\Omega}_t(x_{0})}|\nabla w|^2dx.$$ By (\ref{ww}), (\ref{w21}) and (\ref{f21}), for some universal constant $C_1>0$, we have for $0<t<s<\sqrt{\varepsilon}$,
\begin{align}\label{F}
F(t)\leq &\,\left(\frac{C_1\varepsilon}{s-t}\right)^2F(s)+C(s-t)^2s^{n-1} \cdot\nonumber\\
&\qquad\left(\frac{|\varphi(x_{0}',\varepsilon/2+h_{1}(x_{0}'))|^2}{\varepsilon^2}+\frac{\|\nabla\varphi\|_{L^\infty}^2}{\varepsilon}
+\|\nabla^2\varphi\|_{L^\infty}^2\right).
\end{align}
Let  $t_i=\delta+2C_1i\varepsilon$, $i=0, 1, \cdots$ and $k=\left[\frac{1}{4C_1\sqrt{\varepsilon}}\right]+1$, then $$\frac{C_1\varepsilon}{t_{i+1}-t_i}=\frac{1}{2}.$$
Using (\ref{F}) with $s=t_{i+1}$ and $t=t_i$, we obtain that, for $ i=0,1,2,\cdots, k,$
\begin{align*}
F(t_i)&\leq\frac{1}{4}F(t_{i+1})+C(i+1)^{n-1}\varepsilon^{n-1}\left(|\varphi(x_{0}',\varepsilon/2+h_{1}(x_{0}'))|^2+\varepsilon\|\varphi\|_{C^2(\Gamma_1^+)}^2\right).
\end{align*}
After $k$ iterations, making use of \eqref{lem2.2equ}, we have, for sufficiently small $\varepsilon$,
\begin{align*}
F(t_0)&\leq \big(\frac{1}{4}\big)^kF(t_k)+C\sum_{i=1}^k\big(\frac{1}{4}\big)^{i-1}i^{n-1}\varepsilon^{n-1}\left(|\varphi(x_{0}',\varepsilon/2+h_{1}(x_{0}'))|^2+\varepsilon\|\varphi\|_{C^2(\Gamma_1^+)}^2\right)\\
&\leq \big(\frac{1}{4}\big)^kF(\sqrt{\varepsilon})+C\varepsilon^{n-1}\left(|\varphi(x_{0}',\varepsilon/2+h_{1}(x_{0}'))|^2+\varepsilon\|\varphi\|_{C^2(\Gamma_1^+)}^2\right)\\
&\leq C\varepsilon^{n-1}\left[|\varphi(x_{0}',\varepsilon/2+h_{1}(x_{0}'))|^2+\varepsilon(\|\varphi\|_{C^2(\Gamma_{1}^{+})}^2+\|w\|^2_{L^2(\Omega_1)})\right],
\end{align*}
here we used that the first term in the last but one line decays exponentially, which implies that for $0\leq |x_{0}'| \leq \sqrt{\varepsilon}$,
\begin{align*}
\|\nabla w\|_{L^2(\widehat{\Omega}_\delta(x_0))}^{2}\leq  C\varepsilon^{n-1}\left[|\varphi(x_{0}',\varepsilon/2+h_{1}(x_{0}'))|^2+\varepsilon(\|\varphi\|_{C^2(\Gamma_{1}^{+})}^2+\|w\|^2_{L^2(\Omega_1)})\right].
\end{align*}

{\bf Case 2.} For $\sqrt{\varepsilon}\leq |x_{0}'|<\frac{1}{2}$, $0<t<s<\frac{2|x_{0}'|}{3}$, by (\ref{w}) and (\ref{ff}), we have
\begin{align*}
\int_{\widehat{\Omega}_s(x_{0})}|w|^2dx\leq C|x_{0}'|^4\int_{\widehat{\Omega}_s(x_{0})}|\nabla w|^2dx,
\end{align*}
and
\begin{align*}
&\int_{\widehat{\Omega}_s(x_{0})}|\tilde{f}|^2dx\\
& \leq C|\varphi(x_{0}',\varepsilon/2+h_{1}(x_{0}'))|^2\frac{s^{n-1}}{|x_{0}'|^4}+C\|\nabla\varphi\|_{L^\infty}^2 \frac{s^{n-1}}{|x_{0}'|^2}+Cs^{n-1}\|\nabla^2\varphi\|_{L^\infty}^2.
\end{align*}
Thus, we obtain that, for $0<t<s<\frac{2|x_{0}'|}{3}$,
 \begin{align}\label{F1}
F(t)\leq&\, \left(\frac{C_2|x_{0}'|^2}{s-t}\right)^2F(s)+C(s-t)^2s^{n-1}\cdot\nonumber\\
&\qquad\left(\frac{|\varphi(x_{0}',\varepsilon/2+h_{1}(x_{0}'))|^2}{|x_{0}'|^4}+\frac{\|\nabla\varphi\|_{L^\infty}^2}{|x_{0}'|^2}
+\|\nabla^2\varphi\|_{L^\infty}^2\right),
\end{align}
for some universal constant $C_2>0$. Taking the same iteration procedure as Case 1, set  $t_i=\delta+2C_2i|x_{0}'|^2$, $i=0, 1, \cdots$ and $k=\left[\frac{1}{4C_2|x_{0}'|}\right]+1$,
by (\ref{F1}) with $s=t_{i+1}$ and $t=t_i$, we have, for $i=0,1,2,\cdots, k$,
\begin{align*}
F(t_i)\leq&\,\frac{1}{4}F(t_{i+1})+C(i+1)^{n-1}|x_{0}'|^{2(n-1)}\left(|\varphi(x_{0}',\varepsilon/2+h_{1}(x_{0}'))|^2+|x_{0}'|^{2}\|\varphi\|_{C^2(\Gamma_{1}^{+})}^2\right).
\end{align*}
Similarly, after $k$ iterations, we have
\begin{align*}
F(t_0)\leq&\, \big(\frac{1}{4}\big)^kF(t_k)+C\sum_{i=1}^k\big(\frac{1}{4}\big)^{i-1}i^{n-1}|x_{0}'|^{2(n-1)}\cdot\\
&\left(|\varphi(x_{0}',\varepsilon/2+h_{1}(x_{0}'))|^2+|x_{0}'|^{2}\|\varphi\|_{C^2(\Gamma_{1}^{+})}^2\right)\\
\leq&\, \big(\frac{1}{4}\big)^kF(|x_0'|)+C|x_{0}'|^{2(n-1)}\left(|\varphi(x_{0}',\varepsilon/2+h_{1}(x_{0}'))|^2+|x_{0}'|^{2}\|\varphi\|_{C^2(\Gamma_{1}^{+})}^2\right)\\
\leq&\, C|x_{0}'|^{2(n-1)}\left[|\varphi(x_{0}',\varepsilon/2+h_{1}(x_{0}'))|^2+|x_{0}'|^{2}(\|\varphi\|_{C^2(\Gamma_{1}^{+})}^2+\|w\|^2_{L^2(\Omega_1)})\right],
\end{align*}
which implies that, for $ \sqrt{\varepsilon}\leq |x_{0}'|<\frac{1}{2}$,
\begin{align*}
&\|\nabla w\|_{L^2(\widehat{\Omega}_\delta(x_{0}))}^2\\
&\leq C|x_{0}'|^{2(n-1)}\left[|\varphi(x_{0}',\varepsilon/2+h_{1}(x_{0}'))|^2+|x_{0}'|^{2}(\|\varphi\|_{C^2(\Gamma_{1}^{+})}^2+\|w\|^2_{L^2(\Omega_1)})\right].
\end{align*}
The proof of Lemma \ref{lem2.2} is completed.
\end{proof}
\begin{lemma}\label{lem2.3}
For $l=1,\cdots,N$, if $|x'|\leq \sqrt{\varepsilon}$,
\begin{align}\label{equa2.9}
|\nabla w_l(x)|&\leq \frac{C|\varphi^{l}(x',\varepsilon/2+h_{1}(x'))-\psi^{l}(x',-\varepsilon/2+h_{2}(x'))|}{\sqrt{\varepsilon}}\nonumber\\
&\quad+C\left(\|\varphi^{l}\|_{C^2(\Gamma_{1}^{+})}+\|\psi^{l}\|_{C^2(\Gamma_{1}^{-})}+\|w_l\|_{L^2(\Omega_1)}\right),\end{align}
and if $\sqrt{\varepsilon}<|x'|<R_{0}$,
\begin{align}\label{equa2.10}|\nabla w_l(x)|\leq& \frac{C|\varphi^{l}(x',\varepsilon/2+h_{1}(x'))-\psi^{l}(x',-\varepsilon/2+h_{2}(x'))|}{|x'|}\nonumber\\
&+C\left(\|\varphi^{l}\|_{C^2(\Gamma_{1}^{+})}+\|\psi^{l}\|_{C^2(\Gamma_{1}^{-})}+\|w_l\|_{L^2(\Omega_1)}\right).\end{align}
Consequently, by (\ref{eq1.7}) and (\ref{eq1.7a}), we have for sufficiently small $\varepsilon$ and $x\in\Omega_{R_0}$,
\begin{align}\label{upper}
|\nabla v_l(x)|
\leq& \frac{C|\varphi^l(x',\varepsilon/2+h_{1}(x'))-\psi^{l}(x',-\varepsilon/2+h_{2}(x'))|}{\varepsilon+|x'|^2}\nonumber\\
&+C\left(\|\varphi^{l}\|_{C^2(\Gamma_{1}^{+})}+\|\psi^{l}\|_{C^2(\Gamma_{1}^{-})}+\|v_l\|_{L^2(\Omega_1)}\right).
\end{align}
 Moreover, if $\varphi^l(0', \frac{\varepsilon}{2})\neq\psi^l(0', -\frac{\varepsilon}{2})$,   then
\begin{align*}
|\nabla v_l(0',x_n)|\geq \frac{|\varphi^l(0', \frac{\varepsilon}{2})-\psi^l(0', -\frac{\varepsilon}{2})|}{C\varepsilon},\quad\forall\  x_n\in\left(-\frac{\varepsilon}{2},\frac{\varepsilon}{2}\right).
\end{align*}

\end{lemma}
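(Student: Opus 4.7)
The plan is to upgrade the $L^2$-gradient bounds of Lemma \ref{lem2.2} to pointwise bounds \eqref{equa2.9}--\eqref{equa2.10} via a rescaling argument, and then to add them to the explicit pointwise bounds \eqref{eq1.7}--\eqref{eq1.10} on $|\nabla\tilde u_l|$ to obtain \eqref{upper}. The lower bound on $|\nabla v_l(0',x_n)|$ is a triangle-inequality consequence once one notices that $\partial_n\tilde u_l^l(0',x_n)$ equals \emph{exactly} $(\varphi^l(0',\varepsilon/2)-\psi^l(0',-\varepsilon/2))/\varepsilon$.

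For the upper bound, I would fix $x_0\in\Omega_{R_0}$, set $\delta=\delta(x_0')$, and perform the change of variables
$$y'=\frac{x'-x_0'}{\delta},\qquad y_n=\frac{x_n-\tfrac12(h_1(x_0')+h_2(x_0'))}{\delta},\qquad W(y)=w_l(x).$$
This sends $\widehat\Omega_\delta(x_0)$ onto a region $\widetilde\Omega$ of size $O(1)$ bounded above and below by surfaces whose $C^2$-norms are controlled by $\kappa_0,\kappa_1$ uniformly in $x_0$ (thanks to \eqref{h1} and \eqref{eq1.5'}), with $W=0$ on those surfaces. The rescaled elliptic system has the same ellipticity constant $\lambda$, lower-order coefficients multiplied by $\delta\leq 1$, and right-hand side $\widetilde F(y)=\delta^2\tilde f(x)$ whose $L^\infty$-norm is controlled by \eqref{f}. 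Classical boundary $W^{2,p}$ estimates for elliptic systems (some $p>n$), combined with Sobolev embedding $W^{2,p}\hookrightarrow C^1$ and Poincar\'e (which exploits the homogeneous boundary condition on $W$), then yield
$$\|\nabla_y W\|_{L^\infty}\leq C\bigl(\|\nabla_y W\|_{L^2(\widetilde\Omega)}+\|\widetilde F\|_{L^\infty(\widetilde\Omega)}\bigr).$$
Scaling back through $\|\nabla_y W\|_{L^\infty}=\delta\|\nabla w_l\|_{L^\infty}$ and $\|\nabla_y W\|^2_{L^2(\widetilde\Omega)}=\delta^{2-n}\|\nabla w_l\|^2_{L^2(\widehat\Omega_\delta(x_0))}$, then inserting \eqref{step2} or \eqref{Step2} together with \eqref{f}, produces \eqref{equa2.9}--\eqref{equa2.10} after an elementary manipulation of powers of $\delta\sim\varepsilon$ (for $|x_0'|\leq\sqrt\varepsilon$) or $\delta\sim|x_0'|^2$ (for $\sqrt\varepsilon<|x_0'|<R_0$). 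The bound \eqref{upper} then follows from $|\nabla v_l|\le|\nabla w_l|+|\nabla\tilde u_l|$ together with \eqref{eq1.7}, \eqref{eq1.7a}: the $|\nabla\tilde u_l|$-bound of size $|\varphi^l-\psi^l|/(\varepsilon+|x'|^2)$ dominates the $|\nabla w_l|$-bound in both regimes, since $\sqrt\varepsilon\ge\varepsilon+|x'|^2$ when $|x'|\le\sqrt\varepsilon$ and $\varepsilon$ is small, and $|x'|\ge\varepsilon+|x'|^2$ when $\sqrt\varepsilon<|x'|<R_0\le 1/2$.

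For the lower bound, I would restrict attention to the axis $\{(0',x_n):x_n\in(-\varepsilon/2,\varepsilon/2)\}$ and focus on the $l$-th component $v_l^l=w_l^l+\tilde u_l^l$. Since $\bar u(0',x_n)=(x_n+\varepsilon/2)/\varepsilon$ by \eqref{bar_u}, the definition \eqref{equ_tildeu} gives directly
$$\partial_n\tilde u_l^l(0',x_n)=\frac{\varphi^l(0',\varepsilon/2)-\psi^l(0',-\varepsilon/2)}{\varepsilon}\qquad\text{for all }x_n\in(-\varepsilon/2,\varepsilon/2),$$
whereas \eqref{equa2.9}, applied at $x=(0',x_n)$, yields $|\partial_n w_l^l(0',x_n)|\le C|\varphi^l-\psi^l|/\sqrt\varepsilon+C_0$, with $C_0$ depending only on $\|\varphi^l\|_{C^2}$, $\|\psi^l\|_{C^2}$ and $\|w_l\|_{L^2(\Omega_1)}$. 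Since $1/\sqrt\varepsilon\ll 1/\varepsilon$, the triangle inequality $|\partial_n v_l^l|\ge|\partial_n\tilde u_l^l|-|\partial_n w_l^l|$ then gives, for $\varepsilon$ sufficiently small, the claimed lower bound $|\varphi^l-\psi^l|/(C\varepsilon)$.

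The main obstacle is the rescaling step: one has to verify that the boundary $W^{2,p}$ estimate applies on $\widetilde\Omega$ with a constant independent of $x_0$ and $\varepsilon$. This requires an additional $C^2$-diffeomorphism flattening the top and bottom of $\widetilde\Omega$ with Jacobian uniformly controlled by $\kappa_0,\kappa_1$, after which the rescaled system retains its ellipticity and coefficient bounds. Once that uniformity is secured, the rest of the proof is bookkeeping of powers of $\delta$, $\varepsilon$ and $|x_0'|$.
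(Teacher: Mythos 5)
Your proof is correct and follows essentially the same strategy as the paper: rescale $\widehat\Omega_\delta(x_0)$ to an $O(1)$-domain with uniformly controlled boundaries, apply $W^{2,p}$ estimates plus Sobolev embedding and Poincar\'e to the rescaled system, scale back, and feed in the $L^2$ bounds from Lemma \ref{lem2.2} and the pointwise bound \eqref{f} on $\tilde f$, then combine with \eqref{eq1.7}--\eqref{eq1.7a} via the triangle inequality. Your minor variations --- shifting $y_n$ by the midpoint of the gap, and making the lower bound explicit by computing $\partial_n\tilde u_l^l(0',x_n)=(\varphi^l(0',\varepsilon/2)-\psi^l(0',-\varepsilon/2))/\varepsilon$ exactly and subtracting the $O(1/\sqrt\varepsilon)$ bound on $|\partial_n w_l^l|$ --- are clarifications rather than a different route, and your noted concern about uniformity of the $W^{2,p}$ constant is precisely what the paper handles by taking $R_0$ small so that $\|\hat h_i\|_{C^{1,1}}$ are small.
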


\begin{proof}
Take the case when $\psi\equiv0$ and $l=1$ for instance, and denote $v:=v_1$, $w:=w_1$, $\tilde{u}:=\tilde{u}_{1}$ and $\varphi:=\varphi^1$. Given $x_0=(x_0',x_{0n})\in\Omega_{R_{0}}$, making a change of variables
\begin{align}\label{transform}
\begin{cases}
  x'-x_{0}'=\delta y', \\
x_{n}=\delta y_n.
\end{cases}
\end{align}
Define \begin{align*}
&\hat{h}_1(y'):=\frac{1}{\delta}\left(\frac{\varepsilon}{2}+h_1(\delta y'+x_{0}')\right),
\quad\hat{h}_2(y'):=\frac{1}{\delta}\left(-\frac{\varepsilon}{2}+h_2(\delta y'+x_{0}')\right).
\end{align*}
Then, the region $\widehat{\Omega}_\delta(x_0)$ becomes $Q_1$, where
$$Q_r=\{y\in \mathbb{R}^n\ |\ \hat{h}_2(y')<y_n<\hat{h}_1(y'),\ |y'|<r \},\quad 0<r\leq 1,$$ and the top and bottom boundaries of $Q_r$ become
$$\widehat{\Gamma}_r^+:=\{y\in \mathbb{R}^n\ |\ y_{n}=\hat{h}_1(y'),\ |y'|\leq r\}$$
and
$$\widehat{\Gamma}_r^-:=\{y\in \mathbb{R}^n\ |\ y_{n}=\hat{h}_2(y'),\ |y'|\leq r\},$$ respectively.
From (\ref{eq1.5'}) and the definition of $\hat{h}_1$ and $\hat{h}_2$, we have
\begin{align*}
\hat{h}_1(0')-\hat{h}_2(0')=1,\end{align*}and for $|y'|<1$,
\begin{align*}
&|\nabla \hat{h}_1(y')|+|\nabla \hat{h}_2(y')|\leq C(\delta+|x_{0}'|),\quad
|\nabla^2 \hat{h}_1(y')|+|\nabla^2 \hat{h}_2(y')|\leq C\delta.
\end{align*}
Since $R_{0}$ is small,  $\|\hat{h}_1\|_{C^{1, 1}((-1, 1))}$ and $\|\hat{h}_2\|_{C^{1, 1}((-1, 1))}$ are small and  $Q_1$ is essentially a unit square as far as applications of Sobolev embedding theorems and
classical $L^p$ estimates for elliptic systems are concerned.

Let \begin{align*}
\hat{u}(y', y_n)=\tilde{u}(\delta y'+x_{0}', \delta y_n),\qquad
\hat{w}(y', y_n)=w(\delta y'+x_{0}', \delta y_n).\end{align*}
 Thus, $\hat{w}(y)$ satisfies
 \begin{align*}
\begin{cases}
  \partial_\alpha\left(\hat{A}_{ij}^{\alpha\beta}\partial_\beta \hat{w}^j+\hat{B}_{ij}^\alpha \hat{w}^j\right)+\hat{C}_{ij}^\beta \partial_\beta \hat{w}^j+\hat{D}_{ij}\hat{w}^j=\hat{f}_{i},\quad
&\hbox{in}\  Q_1,  \\
\hat{w}=0, \quad&\hbox{on} \ \widehat{\Gamma}_1^\pm,
\end{cases}
\end{align*}
where
\begin{equation}\label{hatABCD}
\begin{aligned}
\hat{A}(y)=A(\delta y'+x_{0}', \delta y_n),\quad\ \ \hat{B}(y)=\delta\,B(\delta y'+x_{0}', \delta y_n), \\
\hat{C}(y)=\delta\,C(\delta y'+x_{0}', \delta y_n),\quad \hat{D}(y)=\delta^{2}D(\delta y'+x_{0}', \delta y_n),
\end{aligned}
\end{equation}
and $\hat{f}_{i}:=-\partial_\alpha\left(\hat{A}_{ij}^{\alpha\beta}\partial_\beta \hat{u}^j+\hat{B}_{ij}^\alpha \hat{u}^j\right)-\hat{C}_{ij}^\beta \partial_\beta \hat{u}^j-\hat{D}_{ij}\hat{u}^j$.

In view of $\hat{w}=0$ on the upper and lower boundaries of $Q_1$, we have, by Poincar\'{e} inequality, that
$$\|\hat{w}\|_{H^1(Q_1)}\leq C\|\nabla \hat{w}\|_{L^2(Q_1)}.$$
 Using the Sobolev embedding theorem and classical $W^{2, p}$ estimates for elliptic systems, 
  we have, for some $p>n$,
 $$\|\nabla \hat{w}\|_{L^\infty(Q_{1/2})}\leq C\|\hat{w}\|_{W^{2, p}(Q_{1/2})}\leq C\left(\|\nabla \hat{w}\|_{L^2(Q_1)}+\|\hat{f}\|_{L^\infty(Q_1)}\right).$$
Since $$\|\nabla \hat{w}\|_{L^\infty(Q_{1/2})}=\delta\|\nabla {w}\|_{L^\infty(\widehat{\Omega}_{\delta/2}(x_0))},\quad\|\nabla\hat{w}\|_{L^2(Q_1)}=\delta^{1-\frac{n}{2}}\|\nabla w\|_{L^2(\widehat{\Omega}_\delta(x_0))}$$and$$\|\hat{f}\|_{L^\infty(Q_1)}=\delta^2\|\tilde{f}\|_{L^\infty(\widehat{\Omega}_\delta(x_0))},$$
tracing back to $w$ through the transforms, we have
\begin{align*}
\|\nabla w\|_{L^\infty(\widehat{\Omega}_{\delta/2}(x_0))}\leq\frac{C}{\delta}\left(\delta^{1-\frac{n}{2}}\|\nabla w\|_{L^2(\widehat{\Omega}_\delta(x_0))}+\delta^2\|\tilde{f}\|_{L^\infty(\widehat{\Omega}_\delta(x_0))}\right).
\end{align*}
{\bf Case 1.} For $0\leq |x_{0}'|\leq \sqrt{\varepsilon}$.

By  (\ref{f}) and (\ref{step2}), we have
\begin{align*}
&\delta^{-\frac{n}{2}}\|\nabla w\|_{L^2(\widehat{\Omega}_\delta(x_0))}\\
&\leq \frac{C}{\sqrt{\varepsilon}}\left(\frac{\varepsilon}{\delta}\right)^{\frac{n}{2}}\left[|\varphi(x_{0}',\varepsilon/2+h_{1}(x_{0}'))|+\sqrt{\varepsilon}(\|\varphi\|_{C^2(\Gamma_{1}^{+})}+\|w\|_{L^2(\Omega_1)})\right]\\
&\leq \frac{C}{\sqrt{\varepsilon}}|\varphi(x_{0}',\varepsilon/2+h_{1}(x_{0}'))|
+C(\|\varphi\|_{C^2(\Gamma_{1}^{+})}+\|w\|_{L^2(\Omega_1)})\end{align*}
and
\begin{align*}
\delta \|\tilde{f}\|_{L^\infty(\widehat{\Omega}_\delta(x_0))}\leq \frac{C}{\sqrt{\varepsilon}}|\varphi(x_{0}',\varepsilon/2+h_{1}(x_{0}'))|
+C(\|\nabla\varphi\|_{L^\infty}+\|\nabla^2\varphi\|_{L^\infty}).
\end{align*}
Therefore,
\begin{align*}\|\nabla w\|_{L^\infty(\widehat{\Omega}_{\delta/2}(x_0))}\leq \frac{C}{\sqrt{\varepsilon}}|\varphi(x_{0}',\varepsilon/2+h_{1}(x_{0}'))|
+C(\|\varphi\|_{C^2(\Gamma_{1}^{+})}+\|w\|_{L^2(\Omega_1)}).
\end{align*}
\eqref{equa2.9} is proved.

{\bf Case 2.} For $\sqrt{\varepsilon}\leq |x_{0}'|\leq R_{0}$.

Using  (\ref{f}) and (\ref{Step2}), we obtain
\begin{align*}
&\delta^{-\frac{n}{2}}\|\nabla w\|_{L^2(\widehat{\Omega}_\delta(x_0))}\\
&\leq \frac{C}{|x_{0}'|}\left(\frac{|x_{0}'|^2}{\delta}\right)^{\frac{n}{2}}\left[|\varphi(x_{0}',\varepsilon/2+h_{1}(x_{0}'))|+|x_{0}'|(\|\varphi\|_{C^2(\Gamma_{1}^{+})}+\|w\|_{L^2(\Omega_1)})\right]\\
&\leq \frac{C}{|x_{0}'|}|\varphi(x_{0}',\varepsilon/2+h_{1}(x_{0}'))|
+C(\|\varphi\|_{C^2(\Gamma_{1}^{+})}+\|w\|_{L^2(\Omega_1)}),\end{align*}
and
\begin{align*}
\delta \|\tilde{f}\|_{L^\infty(\widehat{\Omega}_\delta(x_0))}\leq \frac{C}{|x_{0}'|}|\varphi(x_{0}',\varepsilon/2+h_{1}(x_{0}'))|
+C(\|\nabla\varphi\|_{L^\infty}+\|\nabla^2\varphi\|_{L^\infty}).
\end{align*}
Therefore,
\begin{align*}\|\nabla w\|_{L^\infty(\widehat{\Omega}_{\delta/2}(x_0))}\leq \frac{C}{|x_{0}'|}|\varphi(x_{0}',\varepsilon/2+h_{1}(x_{0}'))|
+C(\|\varphi\|_{C^2(\Gamma_{1}^{+})}+\|w\|_{L^2(\Omega_1)}).\end{align*}
\eqref{equa2.10} is proved.

Notice that $|\nabla v|\leq|\nabla w|+|\nabla \tilde{u}|$,   by (\ref{eq1.7}), (\ref{eq1.7a}), \eqref{equa2.9} and \eqref{equa2.10}, we obtain (\ref{upper}).
By the Taylor expansion and (\ref{h0}), we have
\begin{align}\label{taylor}
\varphi^l(x', \frac{\varepsilon}{2}+h_1(x'))
=&\varphi^l(0', \frac{\varepsilon}{2})+\nabla_{x'}\varphi^l(0', \frac{\varepsilon}{2})\cdot x'+O(|x'|^2).
\end{align}
It is clear that if $\varphi^l(0', \frac{\varepsilon}{2})\neq0$,  then
\begin{align*}
|\nabla v_l(0',x_n)|\geq \frac{|\varphi^l(0', \frac{\varepsilon}{2})|}{C\varepsilon},\quad\forall\  x_n\in\left(-\frac{\varepsilon}{2},\frac{\varepsilon}{2}\right).
\end{align*}
The proof of Lemma \ref{lem2.3} is finished.
\end{proof}

\begin{proof}[{\bf Proof of Theorem \ref{thm1.1}}]
By Lemma 2.1--Lemma 2.3, we have, for $x\in\Omega_{R_0}$,
\begin{align*}
|\nabla u(x)|&\leq\sum_{l=1}^{N}|\nabla{v}_{l}|\\
&\leq \frac{C|\varphi^l(x',\varepsilon/2+h_{1}(x'))-\psi^l(x',-\varepsilon/2+h_{2}(x'))|}{\varepsilon+|x'|^2}\nonumber\\
&+C\left(\|\varphi\|_{C^2(\Gamma_{1}^{+})}
+\|\psi\|_{C^2(\Gamma_{1}^{-})}+\|u\|_{L^2(\Omega_1)}\right).
\end{align*}
 Applying the standard   elliptic theorem (see Agmon et al. \cite{AD1} and \cite{AD2}), we have
 $$\|\nabla u\|_{L^\infty(\Omega_{1/2}\setminus \Omega_{R_{0}})}\leq C\left(\|\varphi\|_{C^2(\Gamma_1^{+})}+\|\psi\|_{C^2(\Gamma_1^{-})}+\|u\|_{L^2(\Omega_{1})}\right).$$
If $\varphi^l(0', \frac{\varepsilon}{2})\neq\psi^l(0', -\frac{\varepsilon}{2})$   for some integer $l$, then by Lemma 2.3,  we obtain
$$|\nabla u(0',x_n)|\geq\frac{|\varphi^l(0', \frac{\varepsilon}{2})-\psi^l(0', -\frac{\varepsilon}{2})|}{C\varepsilon},\quad\forall\ x_n\in\left(-\frac{\varepsilon}{2}, \frac{\varepsilon}{2}\right).$$
 The proof of Theorem \ref{thm1.1} is completed.
\end{proof}
\vspace{.5cm}
\section{ Proof of Corollary \ref{corol1.2}}\label{sec_3}
Since the problem (\ref{eq1.1'}) has much more applications in practice, such as conductivity problem, anti-plane shear model. We give a sketched proof of Corollary \ref{corol1.2} and only list its main ingredients.
We use the auxillary scalar function $\bar{u}\in C^2(\mathbb{R}^n)$ introduced in Section 2, which satisfying (\ref{bar_u}).
Define
\begin{align*}
\tilde{u}(x)=\varphi(x',\frac{\varepsilon}{2}+h_1(x'))\bar{u}+\psi(x',-\frac{\varepsilon}{2}+h_2(x'))(1-\bar{u}),
\end{align*} then
\begin{align}\label{eq1.7'}
|\nabla_{x'}\tilde{u}(x)|&\leq\frac{C|x'|}{\varepsilon+|x'|^2}|\varphi(x',\varepsilon/2+h_{1}(x'))-\psi(x',-\varepsilon/2+h_{2}(x'))|\nonumber\\
&\quad+C(\|\nabla\varphi\|_{L^{\infty}}+\|\nabla\psi\|_{L^{\infty}}),
\end{align}
\begin{align}\label{eq1.7a'}
&\frac{|\varphi(x',\varepsilon/2+h_{1}(x'))-\psi(x',-\varepsilon/2+h_{2}(x'))|}{C(\varepsilon+|x'|^2)}\leq\nonumber\\
&|\partial_{n}\tilde{u}(x)|\leq\frac{C|\varphi(x',\varepsilon/2+h_{1}(x'))
-\psi(x',-\varepsilon/2+h_{2}(x'))|}{\varepsilon+|x'|^2},\quad\quad
\end{align}
and
\begin{align}
|\partial_{\alpha \alpha}\tilde{u}(x)|&\leq\frac{C}{\varepsilon+|x'|^2}|\varphi(x',\varepsilon/2+h_{1}(x'))-\psi(x',-\varepsilon/2+h_{2}(x'))|\nonumber\\
&\quad+C\left(\frac{|x'|}{\varepsilon+|x'|^2}+1\right)(\|\nabla\varphi\|_{L^{\infty}}+
\|\nabla\psi\|_{L^{\infty}})\nonumber\\
&\quad+C(\|\nabla^{2}\varphi\|_{L^{\infty}}+\|\nabla^{2}\psi\|_{L^{\infty}}),\quad \alpha=1,\cdots, n-1,\label{eq1.8'}\\
\partial_{n n}\tilde{u}(x)&=0.\label{eq1.10'}
\end{align}

 Denote $w=u-\tilde{u}$, which
satisfies the following boundary value problem
\begin{align}\label{eq2.35}
\begin{cases}
  \Delta w=-\Delta\tilde{u},
&\hbox{in}\  \Omega_1,  \\
w=0,\ &\hbox{on}\ \Gamma_1^{\pm}.
\end{cases}
\end{align}

{\bf Step 1.} Boundedness of $\int_{\Omega_{1/2}}|\nabla w|^2dx$.

 Multiplying the equation in (\ref{eq2.35})
by $w$ and applying integration by parts on $\Omega_{1/2}$, in view of $w=0$ on $\Gamma_{1}^{\pm}$, we have
\begin{align*}
&\int_{\Omega_{1/2}}|\nabla w|^2dx\\
&=\int_{\Omega_{1/2}}\sum_{\alpha=1}^{n-1}w\partial_{\alpha\alpha} \tilde{u} dx+\int\limits_{\scriptstyle |x'|={\frac{1}{2}},\atop\scriptstyle
-\frac{\varepsilon}{2}+h_2(x')<x_{n}<\frac{\varepsilon}{2}+h_1(x')\hfill}\sum_{\alpha=1}^{n-1} w\partial_\alpha w  \frac{x_\alpha}{r}ds\\
&=-\int_{\Omega_{1/2}}\nabla_{x'}w\cdot \nabla_{x'}\tilde{u} dx+\int\limits_{\scriptstyle |x'|={\frac{1}{2}},\atop\scriptstyle
-\frac{\varepsilon}{2}+h_2(x')<x_{n}<\frac{\varepsilon}{2}+h_1(x')\hfill}
\sum_{\alpha=1}^{n-1}w(\partial_{\alpha}\tilde{u}+\partial_\alpha w) \frac{x_\alpha}{r}ds\\
&\leq \frac{1}{2} \int_{\Omega_{1/2}}|\nabla w|^2dx+\frac{1}{2} \int_{\Omega_{1/2}}|\nabla_{x'}\tilde{u}|^2dx\\
&\quad+\int\limits_{\scriptstyle |x'|={\frac{1}{2}},\atop\scriptstyle
-\frac{\varepsilon}{2}+h_2(x')<x_{n}<\frac{\varepsilon}{2}+h_1(x')\hfill}C(|\nabla_{x'}\tilde{u}|^2+|w|^2+|\nabla w|^2)ds.
\end{align*}
By using (\ref{eq1.7'}),
\begin{align*}
\int_{\Omega_{1/2}}|\nabla_{x'}\tilde{u}|^2dx\leq C(\|\varphi\|^2_{C^1(\Gamma_1^+)}+\|\psi\|^2_{C^1(\Gamma_1^-)}),\quad\quad\quad\quad\quad\quad\quad\\
\int\limits_{\scriptstyle |x'|={\frac{1}{2}},\atop\scriptstyle
-\frac{\varepsilon}{2}+h_2(x')<x_{n}<\frac{\varepsilon}{2}+h_1(x')\hfill}|\nabla_{x'}\tilde{u}|^2ds
\leq C(\|\varphi\|^2_{C^1(\Gamma_1^+)}+\|\psi\|^2_{C^1(\Gamma_1^-)}). \quad\quad\quad\quad
\end{align*}
Using (\ref{eq1.8'}) and (\ref{eq1.10'}), for $x\in\Omega_{1}\setminus \overline{\Omega_{1/4}}$, $$
|\Delta\tilde{u}(x)|\leq C(\|\varphi\|_{C^2(\Gamma_1^+)}+\|\psi\|_{C^2(\Gamma_1^-)}).$$
In view of $w=0$ on $\Gamma_1^{\pm}$, by a standard elliptic theorem  for Poisson equation, we have
\begin{align*}
\|w\|_{L^\infty(\Omega_{2/3}\setminus \overline{\Omega_{1/3}})}^2+\|\nabla w\|_{L^\infty(\Omega_{2/3}\setminus \overline{\Omega_{1/3}})}^2
&\leq C(\|w\|_{L^2(\Omega_1\setminus \overline{\Omega_{1/4}})}+\|\Delta\tilde{u}\|_{L^\infty(\Omega_1\setminus \overline{\Omega_{1/4}})}) \\ &\leq C(\|w\|^2_{L^2(\Omega_1)}+\|\varphi\|^2_{C^2(\Gamma_1^+)}+\|\psi\|^2_{C^2(\Gamma_1^-)}),
\end{align*}
which, implies that
\begin{align}\label{lem2.2equ'}
\int_{\Omega_{1/2}}|\nabla w|^2dx\leq C(\|w\|^2_{L^2(\Omega_1)}+\|\varphi\|^2_{C^2(\Gamma_1^+)}+\|\psi\|^2_{C^2(\Gamma_1^-)}),
\end{align}
where $C$ depends on $n$, $\lambda$, $\kappa_0$ and $\kappa_1$.

{\bf Step 2.} Estimate   of $\int_{\widehat{\Omega}_\delta(x_{0})}|\nabla w|^2dx$, where $x_{0}\in\Omega_{1/2},~\delta=\delta(x_0'):=\varepsilon+h_1(x_0')-h_2(x_0')$.


Let $\eta(x')$ be the  cut-off function in Lemma 2.2.
Multiplying $\eta^2w$ on both sides of the equation in (\ref{eq2.35}) and applying integration by parts, we have
\begin{align*}
\int_{\widehat{\Omega}_s(x_{0})}\nabla w\cdot\nabla(\eta^2w)dx
=\int_{\widehat{\Omega}_s(x_{0})}\Delta \tilde{u}\eta^2wdx.
\end{align*}
Note that
\begin{align*}
\int_{\widehat{\Omega}_s(x_{0})}\nabla w\cdot\nabla(\eta^2w)dx
=\int_{\widehat{\Omega}_s(x_{0})}|\nabla(\eta w)|^{2}dx-\int_{\widehat{\Omega}_s(x_{0})}w^{2}|\nabla\eta|^{2}dx,
\end{align*}
by the Cauchy inequality, we have
\begin{align}\label{2.39}
\int_{\widehat{\Omega}_t(x_{0})}|\nabla w|^2dx&\leq\int_{\widehat{\Omega}_s(x_{0})}|\nabla(\eta w)|^2dx\nonumber\\
&=\int_{\widehat{\Omega}_s(x_{0})}w^{2}|\nabla\eta|^{2}dx
+\int_{\widehat{\Omega}_s(x_{0})}\Delta \tilde{u}\eta^2wdx\nonumber\\
&\leq\frac{C}{(s-t)^2}\int_{\widehat{\Omega}_s(x_{0})}|w|^2dx+(s-t)^2\int_{\widehat{\Omega}_s(x_{0})}|\Delta \tilde{u}|^2dx.
\end{align}
It is easy to see that \begin{align*}
\int_{\widehat{\Omega}_s(x_{0})}|w|^2dx
&\leq\int_{|x'-x_{0}'|<s}C(\varepsilon+|x'|^2)^2\int_{-\frac{\varepsilon}{2}+h_2(x')}^{\frac{\varepsilon}{2}+h_1(x')}|\nabla w|^2dx_{n}dx',
\end{align*} and by (\ref{eq1.8'})-(\ref{eq1.10'}),
\begin{align*}
&\int_{\widehat{\Omega}_s(x_{0})}|\Delta\tilde{u}|^2dx\nonumber\\
&\leq C|\varphi(x_0',\varepsilon/2+h_1(x_0'))-\psi(x_0',-\varepsilon/2+h_{2}(x_0'))|^2\int_{|x'-x_0'|<s}\frac{1}{\varepsilon+|x'|^2}dx'\nonumber\\
&\quad+C(\|\nabla\varphi\|_{L^\infty}^2+\|\nabla\psi\|_{L^\infty}^2)\int_{|x'-x_0'|<s}\left(\frac{|x-x_0'|^2}{\varepsilon+|x'|^2}+
1\right)dx'\nonumber\\
&\quad+Cs^{n-1}(\|\nabla^2\varphi\|_{L^\infty}^2+\|\nabla^2\psi\|_{L^\infty}^2).
\end{align*}

{\bf Case 1.} For $|x_{0}'|\leq\sqrt{\varepsilon}$, $0<t<s<\sqrt{\varepsilon}$,   we have
\begin{align}\label{2.40}
\int_{\widehat{\Omega}_s(x_{0})}|w|^2dx
\leq C {\varepsilon}^2\int_{\widehat{\Omega}_s(x_{0})}|\nabla w|^2dx,
\end{align}
and
\begin{align}\label{2.41}
\int_{\widehat{\Omega}_s(x_{0})}|\Delta\tilde{u}|^2dx&\leq \frac{Cs^{n-1}}{\varepsilon}|\varphi(x_0',\varepsilon/2+h_1(x_0'))-\psi(x_0',-\varepsilon/2+h_{2}(x_0'))|^2\nonumber\\
&\quad+Cs^{n-1}(\|\varphi\|_{C^2(\Gamma_1^+)}^2+\|\psi\|_{C^2(\Gamma_1^-)}^2).
\end{align}
Denote $F(t):=\int_{\widehat{\Omega}_t(x_{0})}|\nabla w|^2dx$. By (\ref{2.39})-(\ref{2.41}), for some universal constant $\hat{C}_1>0$, we have for $0<t<s<\sqrt{\varepsilon}$,
\begin{align}\label{F'}
F(t)&\leq\left(\frac{\hat{C}_1\varepsilon}{s-t}\right)^2F(s)\nonumber\\
&\quad+C(s-t)^2s^{n-1}(\frac{1}{\varepsilon}|\varphi(x_0',\varepsilon/2+h_1(x_0'))
-\psi(x_0',-\varepsilon/2+h_{2}(x_0'))|^2
\nonumber\\
&\quad+\|\varphi\|_{C^2(\Gamma_1^+)}^2+\|\psi\|_{C^2(\Gamma_1^-)}^2).
\end{align}
Let $t_i=\delta+2i\hat{C}_1\varepsilon,\ i=0,1,\cdots$ and $k=\left[\frac{1}{4\hat{C}_1\sqrt{\varepsilon}}\right]+1$, then
$$\frac{\hat{C}_1\varepsilon}{t_{i+1}-t_i}=\frac{1}{2}.$$
Using (\ref{F'}) with $s=t_{i+1}$ and $t=t_i$, we obtain that, for $ i=0,1,2,\cdots, k,$
\begin{align*}
F(t_i)&\leq\frac{1}{4}F(t_{i+1})\\
&+C(i+1)^{n-1}\varepsilon^{n}
[|\varphi(x_{0}',\varepsilon/2+h_{1}(x_{0}'))-\psi(x_0',-\varepsilon/2+h_{2}(x_0'))|^2\\
&+\varepsilon(\|\varphi\|_{C^2(\Gamma_1^+)}^2+\|\psi\|_{C^2(\Gamma_1^-)}^2)].
\end{align*}
After $k$ iterations, making use of \eqref{lem2.2equ'}, we have, for sufficiently small $\varepsilon$,
\begin{align*}
F(t_0)
&\leq C\varepsilon^{n}[|\varphi(x_{0}',\varepsilon/2+h_{1}(x_{0}'))-\psi(x_0',-\varepsilon/2+h_{2}(x_0'))|^2\\
&\quad+\varepsilon(\|w\|^2_{L^2(\Omega_1)}+\|\varphi\|_{C^2(\Gamma_{1}^{+})}^2+\|\psi\|_{C^2(\Gamma_{1}^{-})}^2)],
\end{align*}
here we used that the first term in the last but one line decays exponentially, which implies that for $0\leq |x_{0}'| \leq \sqrt{\varepsilon}$,
\begin{align}\label{2.42}
\|\nabla w\|_{L^2(\widehat{\Omega}_\delta(x_0))}^{2}&\leq  C\varepsilon^{n} [|\varphi(x_{0}',\varepsilon/2+h_{1}(x_{0}'))-\psi(x_0',-\varepsilon/2+h_{2}(x_0'))|^2\nonumber\\
&\quad +\varepsilon(\|w\|^2_{L^2(\Omega_1)}+\|\varphi\|_{C^2(\Gamma_{1}^{+})}^2+\|\psi\|_{C^2(\Gamma_{1}^{-})}^2)].
\end{align}

{\bf Case 2.} Similarly, for $\sqrt{\varepsilon}<|x_0'|<\frac{1}{2}$, $0<t<s<\frac{2|x_0'|}{3}$,  we have
\begin{align}\label{2.43}
\|\nabla w\|_{L^2(\widehat{\Omega}_\delta(x_0))}^{2}&\leq  C{|x_0'|}^{2n}[|\varphi(x_{0}',\varepsilon/2+h_{1}(x_{0}'))-\psi(x_0',-\varepsilon/2+h_{2}(x_0'))|^2\nonumber\\
&\quad+|x_0'|^2(\|w\|^2_{L^2(\Omega_1)}+\|\varphi\|_{C^2(\Gamma_{1}^{+})}^2+\|\psi\|_{C^2(\Gamma_{1}^{-})}^2)].
\end{align}

{\bf Step 3.} Estimate of $|\nabla w(x)|$ for $x\in\Omega_{R_0}$, for some small $R_0\in(0,\frac{1}{2})$.

By using a similar argument in Lemma 2.3,
it follows from (\ref{2.42}) and (\ref{2.43})  that, for $0\leq |x_{0}'|\leq \sqrt{\varepsilon}$,
\begin{align*}\|\nabla w\|_{L^\infty(\widehat{\Omega}_{\delta/2}(x_0))}&\leq C|\varphi(x_{0}',\varepsilon/2+h_{1}(x_{0}'))-\psi(x_0',-\varepsilon/2+h_2(x_0'))|\\
&\quad+C\sqrt{\varepsilon}(\|w\|_{L^2(\Omega_1)}+\|\varphi\|_{C^2(\Gamma_{1}^{+})}+\|\psi\|_{C^2(\Gamma_{1}^{-})}).
\end{align*}
and for
$\sqrt{\varepsilon}\leq |x_{0}'|\leq R_{0}$,
\begin{align*}\|\nabla w\|_{L^\infty(\widehat{\Omega}_{\delta/2}(x_0))}&\leq C|\varphi(x_{0}',\varepsilon/2+h_{1}(x_{0}'))-\psi(x_0',-\varepsilon/2+h_2(x_0'))|\\
&\quad+C|x_{0}'|(\|w\|_{L^2(\Omega_1)}+\|\varphi\|_{C^2(\Gamma_{1}^{+})}+\|\psi\|_{C^2(\Gamma_{1}^{-})}).\end{align*}
 By using (\ref{eq1.7'})-(\ref{eq1.7a'}), we obtain (\ref{mainestimate'}).

Finally, if $\varphi(0', \frac{\varepsilon}{2})\neq\psi(0', -\frac{\varepsilon}{2})$, then by the Taylor expansion, we obtain
$$|\nabla u(0', x_n)|\geq\frac{|\varphi(0', \frac{\varepsilon}{2})-\psi(0', -\frac{\varepsilon}{2})|}{C\varepsilon},\quad\forall\ x_n\in\left(-\frac{\varepsilon}{2}, \frac{\varepsilon}{2}\right).$$
The proof of  Corollary 1.3 is completed.


\bibliographystyle{plain}

\def\cprime{$'$}

\end{document}